\documentclass[11pt]{article}
\usepackage[usenames,dvipsnames]{xcolor}
\definecolor{OliveGreen}{rgb}{0,0.6,0}
\definecolor{tempblue}{RGB}{36, 56, 231 }
\usepackage{cite}
\usepackage{subfigure}
\usepackage{graphicx, color, graphpap}
\usepackage{amsmath,amssymb,amsthm,bm}
\usepackage{mathabx}
\usepackage{ifthen}
\usepackage{adjustbox}

\usepackage{matlab-prettifier}

\usepackage{mathrsfs}
\usepackage{mathtools}
\usepackage[shortlabels]{enumitem}

\usepackage{physics}
\usepackage{pstricks}
\usepackage{lineno}
\usepackage{float}
\usepackage{fancyhdr}
\usepackage[us,12hr]{datetime} 
\usepackage{exscale}
\usepackage{tabularx}
\usepackage{longtable}
\usepackage{latexsym}
\usepackage{fullpage}       
\usepackage{float}
\usepackage{verbatim}
\usepackage{multirow}
\usepackage{overpic}
\usepackage{comment}

\usepackage{siunitx}
\usepackage{colortbl}

\definecolor{tablegray}{RGB}{215, 219, 221 }

\usepackage{booktabs}
\usepackage{makeidx}

\usepackage{algorithm}   
\usepackage{algpseudocode}

\makeindex

\usepackage{csvsimple}

\usepackage[pagebackref=true]{hyperref}
\hypersetup{
	plainpages=false,       
	unicode=false,          
	pdftoolbar=true,        
	pdfmenubar=true,        
	pdffitwindow=false,     
	pdfstartview={FitH},    
	pdftitle={
Optimal Preconditioners; Iterative Methods
	           },  
	pdfnewwindow=true,      
	colorlinks=true,        
	linkcolor=blue,         
	citecolor=green,        
	filecolor=magenta,      
	urlcolor=cyan           
}

\numberwithin{equation}{section}  
\numberwithin{table}{section}
\numberwithin{figure}{section}
\numberwithin{algorithm}{section}

\usepackage{thmtools}
\usepackage[noabbrev]{cleveref}
\makeatletter
\makeatother


\usepackage{mathtools}

\usepackage{refcount} 


\def\R{\mathbb{R}}
\def\Rp{\R_+}

\def\Sc{\mathbb{S}}
\def\Sn{\Sc^n}

\def\Sr{\Sc^r}
\def\Snp{\Sc_+^n}
\def\PSnp{P_{\Sc_+^n}}   
\def\Snn{\Sc_-^n}

\def\Srp{\Sc_+^r}

\def\Snpp{\Sc_{++}^n}

\def\Rn{\mathbb{R}^n}
\def\Rm{\mathbb{R}^m}

\def\Rnp{\mathbb{R}_+^n}

\def\cref#1{{\normalfont(\ref{#1})}}



\def\cref#1{{\normalfont(\ref{#1})}}

\newtheorem{theorem}{Theorem}[section]

\newtheorem{definition}[theorem]{Definition}
\newtheorem{example}[theorem]{Example}

\newtheorem{prop}[theorem]{Proposition}

\newtheorem{corollary}[theorem]{Corollary}

\newtheorem{remark}[theorem]{Remark}

\newtheorem{lemma}[theorem]{Lemma}

\crefname{thm}{Theorem}{Theorems}
\Crefname{thm}{Theorem}{Theorems}
\crefname{problem}{Problem}{Theorems}
\Crefname{problem}{Problem}{Theorems}
\Crefname{assump}{Assumption}{Theorems}
\crefname{assump}{Assumption}{Theorems}
\crefname{question}{Question}{Theorems}
\Crefname{question}{Question}{Theorems}
\crefname{conjecture}{Conjecture}{Theorems}
\Crefname{conjecture}{Conjecture}{Theorems}
\crefname{prop}{Proposition}{Propositions}
\Crefname{prop}{Proposition}{Propositions}
\crefname{cor}{Corollary}{Corollaries}
\Crefname{cor}{Corollary}{Corollaries}
\crefname{lem}{Lemma}{Lemmas}
\Crefname{lem}{Lemma}{Lemmas}
\theoremstyle{definition}
\crefname{defn}{definition}{definitions}
\Crefname{defn}{Definition}{Definitions}
\crefname{conj}{Conjecture}{Conjectures}
\Crefname{conj}{Conjecture}{Conjectures}
\crefname{remark}{Remark}{Remarks}
\Crefname{remark}{Remark}{Remarks}
\crefname{rmk}{Remark}{Remarks}
\Crefname{rmk}{Remark}{Remarks}
\crefname{example}{Example}{Examples}
\Crefname{example}{Example}{Examples}
\crefname{align}{}{}
\Crefname{align}{}{}
\crefname{equation}{}{}
\Crefname{equation}{}{}

\newcommand{\textdef}[1]{\textit{#1}\index{#1}}

\newcommand{\cX}{{\mathcal X} }

\newcommand{\cA}{{\mathcal A} }

\newcommand{\cR}{{\mathcal R} }

\newcommand{\cF}{{\mathcal F} }

\newcommand{\cV}{{\mathcal V} }

\newcommand{\beps}{\bm{\varepsilon}}

\newcommand{\cL}{{\mathcal L} }
\newcommand{\cS}{{\mathcal S} }
\newcommand{\cO}{{\mathcal O} }
\newcommand{\cOn}{\cO^n}



\newcommand{\MC}{\textbf{MC}\,}

\newcommand{\QAP}{\textbf{QAP}\,}
\newcommand{\QAPp}{\textbf{QAP}}

\newcommand{\DNN}{\textbf{DNN}\,}
\newcommand{\DNNp}{\textbf{DNN}}
\newcommand{\KKT}{\textbf{KKT}\,}
\newcommand{\KKTp}{\textbf{KKT}}

\def\BAP{\mbox{\bf BAP}\,}
\def\BAPp{\mbox{\bf BAP}}

\def\SDP{\mbox{SDP}\,}
\def\SDPp{\mbox{SDP}}

\def\LPp{\mbox{LP}}

\newcommand{\matlab}{\textsc{Matlab}\,}

\newcommand{\FR}{\textbf{FR}\,}
\newcommand{\FRp}{\textbf{FR}}

\newcommand{\CQ}{\textbf{CQ}\,}
\newcommand{\CQp}{\textbf{CQ}}

\def\Sk{\Sc^k}

\newcommand{\A}{{\mathcal A}}

\newcommand{\bbm}{\begin{bmatrix}}
\newcommand{\ebm}{\end{bmatrix}}
\newcommand{\bem}{\begin{pmatrix}}
\newcommand{\eem}{\end{pmatrix}}
\newcommand{\beq}{\begin{equation}}
\newcommand{\beqs}{\begin{equation*}}
\newcommand{\bet}{\begin{table}}
\newcommand{\eeq}{\end{equation}}
\newcommand{\eeqs}{\end{equation*}}
\newcommand{\beqr}{\begin{eqnarray}}

\renewcommand{\vec}{{\rm vec}}

\DeclareMathOperator{\supp}{{supp}}
\DeclareMathOperator{\cond}{{cond}}

\DeclareMathOperator{\lin}{lin}%
\DeclareMathOperator{\face}{face}
\DeclareMathOperator{\sd}{sd}
\DeclareMathOperator{\maxsd}{maxsd}
\DeclareMathOperator{\iips}{{ips}}

\DeclareMathOperator{\Nullity}{nullity}
\DeclareMathOperator{\nul}{null}
\DeclareMathOperator{\range}{range}

\DeclareMathOperator{\kvec}{{vec}}

\DeclareMathOperator{\blkdiag}{{blkdiag}}
\DeclareMathOperator{\diag}{{diag}}
\DeclareMathOperator{\Diag}{{Diag}}

\DeclareMathOperator{\svec}{{svec}}

\DeclareMathOperator{\relint}{{relint}}

\DeclareMathOperator{\spanl}{{span}}

\DeclareMathOperator{\odiag}{{o^0\diag}}
\DeclareMathOperator{\bdiag}{{b^0\diag}}


\newcommand{\nc}{\newcommand}
\nc{\arrow}{{\rm arrow\,}}
\nc{\Arrow}{{\rm Arrow\,}}
\nc{\BoDiag}{{\rm B^0Diag\,}}
\nc{\bodiag}{{\rm b^0diag\,}}

\nc{\Mm}{{\mathcal M}^{m} }
\nc{\Mmn}{{\mathcal M}^{mn} }
\nc{\Mnr}{{\mathcal M}_{nr} }
\nc{\Mnmr}{{\mathcal M}_{(n-1)r} }
\nc{\kwqqp}{Q{$^2$}P\,}
\nc{\kwqqps}{Q{$^2$}Ps}

\nc{\notinaho}{(X,S)\in \overline{AHO}(\A)}
\nc{\inaho}{(X,S)\in AHO(\A)}

\newcommand{\bea}{\begin{eqnarray}}%
\newcommand{\eea}{\end{eqnarray}}%
\newcommand{\beas}{\begin{eqnarray*}}%
\newcommand{\eeas}{\end{eqnarray*}}%
%
%

%
%
%
%
%
%
%
%
%
%
%
%
%
%
%
%
%
%
%
{}



\newcommand{\Hnp}[1][]{\,\mathbb{H}_+^{\ifthenelse{\equal{#1}{}}{n}{#1}}}
\newcommand{\Hn}[1][]{\,\mathbb{H}^{\ifthenelse{\equal{#1}{}}{n}{#1}}}
\newcommand{\Hk}[1][]{\,\mathbb{H}^{\ifthenelse{\equal{#1}{}}{k}{#1}}}
\newcommand{\Dn}[1][]{\,\mathbb{D}^{\ifthenelse{\equal{#1}{}}{n}{#1}}}





\newcommand{\<}{\langle}
\def\>{\rangle}




\DeclareMathOperator{\argmin}{\arg\min}



%
%

\usepackage{authblk}
\author{
Haesol Im}
\author{\href{https://uwaterloo.ca/combinatorics-and-optimization/about/people/group/50}{%
Woosuk L. Jung}}
\author{\href{https://cvnet.cpd.ua.es/curriculum-breve/es/torregrosa-belen-david/110216}{%
David Torregrosa-Bel\'en\footnote{
D.T.B was
partially supported by Centro de Modelamiento Matemático (CMM) BASAL fund FB210005 for center of excellence from ANID-Chile, Fondecyt Postdoctorado 3250039 and 
 by Grant PID2022-136399NB-C21 funded by ERDF/EU and by
MICIU/AEI/10.13039/501100011033.
}
}}
\author{\href{https://www.math.uwaterloo.ca/~hwolkowi}{%
Henry Wolkowicz}}
\affil{Department of Combinatorics and Optimization,
Faculty of Mathematics\\
University of Waterloo\\
Waterloo, Ontario, Canada N2L 3G1
}
\begin{document}
\linenumbers
\renewcommand{\thelinenumber}{} 
\title{{
Projection, Degeneracy, and Singularity Degree for Spectrahedra
}
}

\date{ \today \\
        Department of Combinatorics and Optimization\\
        Faculty of Mathematics, University of Waterloo, Canada. \\
Research partially supported by the Natural Sciences and Engineering Research
Council of Canada.
}
\maketitle

\tableofcontents
\listoffigures
\listoftables
\listofalgorithms

\begin{abstract}
Facial reduction, \FRp, is a regularization technique for convex
programs where the strict feasibility constraint qualification, \CQp, fails.
Though this \CQ holds generically, failure is pervasive in applications
such as semidefinite relaxations of hard discrete optimization problems.
In this paper we relate \FR
to the analysis of the convergence behaviour of a semismooth
Newton root finding method for the projection onto a spectrahedron, 
i.e.,~onto the intersection of a linear manifold and the semidefinite cone.
We examine the effect of failure of strict feasibility on the projection problem.
In the process, we derive  an elegant formula for the 
projection onto a face of the semidefinite cone obtained via regularization and discuss pathologies that arise in the absence of strict feasibility.
We show further that
the ill-conditioning of the Jacobian of the Newton method near optimality 
characterizes the degeneracy of the nearest point in the spectrahedron.
We apply the results, both theoretically and empirically,
 to the problem of finding nearest points to the
sets of: (i) correlation matrices or the \emph{elliptope}; and 
(ii) semidefinite relaxations of permutation matrices or the \emph{vontope},
i.e.,~the feasible sets for the semidefinite relaxations of the
max-cut and quadratic assignment
problems, respectively. 

\end{abstract}

{\bf Key Words:}
facial reduction, spectrahedra, degeneracy, Jacobian, singularity
degree, elliptope, vontope.

\medskip
{\bf AMS Subject Classification:} 90C22, 90C25, 90C59.


\section{Introduction}
\label{sect:introd}

Facial reduction, \FRp, is a finite step process that regularizes
convex programs where the strict feasibility constraint qualification,
\CQp, fails. This \CQ holds generically for linear
conic programs, see e.g.,~\cite{MR3622250}.
However, failure is
pervasive in applications such as semidefinite programming, \SDPp,
relaxations of hard discrete optimization problems,
e.g.,~\cite{DrusWolk:16}. The minimum number of \FR steps
is denoted as the \emph{singularity degree of $\cF$, $\sd(\cF)$}, 
of the program with feasible set $\cF$. It has been
shown to be related to stability, error analysis, and convergence rates,
see e.g.,~\cite{S98lmi,SWW:17,MR4614122,DrusLiWolk:14}. 
Further generalized notions of singularity degree such as the maximum
number of \FR steps are studied
in~\cite{ImWolk:22,HaesolIm:2022} and shown to also relate to stability
and convergence rates.
In this paper we study $\sd(\cF)$ and relations to the
projection problem, or best approximation problem (\BAPp), 
onto a \textdef{spectrahedron}, the
intersection of a linear manifold and the positive semidefinite cone in
symmetric matrix space.

Our main purpose is to examine the effect of failure of strict feasibility 
on the projection problem. In the absence of strict feasibility,
we find surprising relationships between the eigenpairs of 
small eigenvalues of the Jacobian
in a newly proposed Newton method for the projection problem and finding 
exposing vectors for \FRp.
Furthermore, we provide a characterization that links the degeneracy of a point $\bar{X}$ in the spectrahedron, which pertains solely to the feasible set, to the singularity of the Jacobian computed in the course of the Newton method applied to the \BAPp, where $\bar{X}$ is the projected point. 
This establishes a connection between the degeneracy and the behaviour of the Newton method.
We apply the results, both theoretically and empirically,
to the problem of finding nearest points to the
sets of: (i) correlation matrices or the elliptope; and 
(ii) semidefinite relaxations of permutation matrices or the vontope,
i.e.,~the feasible sets for the semidefinite relaxations of the
max-cut and quadratic assignment
problems, respectively.

\index{$\sd(\cF)$, singularity degree of $\cF$}
\index{singularity degree of $\cF$, $\sd(\cF)$}

\subsection{Projection Problem}
\label{sect:BAPprob}
\index{$X^*$, optimum}
\index{optimum, $X^*$}
\index{$p^*$, optimal value}
\index{optimal value, $p^*$}

\index{\BAPp, best approximation problem}
\index{$W\in \Sn$, data}

We work with the Euclidean space of $n\times n$ real symmetric matrices,
$\Sn$, equipped with the trace inner product.  
Let the \textdef{data, $W\in \Sn$}, be given.
The projection, or basic \textdef{best approximation problem, \BAPp}, is
\begin{equation}
\label{eq:DNNparam}
\begin{array}{rcll}
X^* =   & \argmin & \frac 12 \|X-W\|^2,
               & \quad \qquad \textdef{$p^* = \frac 12 \|X^*-W\|^2$},
\\ & \text{s.t.} &  X\in \cF := \cL \cap \Snp,
\end{array}
\end{equation}
where $\Snp\subseteq \Sn$ is the closed convex cone of positive
semidefinite matrices in the vector space of real symmetric matrices of
order $n$, equipped with the trace inner product. Given a convex cone 
$\cX$, we let \textdef{$X\succeq_\cX 0$} denote $X\in \cX$; and we 
often use $X \succeq 0$ for $X\succeq_{\Snp} 0$ when the meaning is clear.
Here $\cL\subseteq \Sn$ is a linear manifold;
and, $p^*, X^*$ are the optimal value and optimum, respectively.
The representation of the linear manifold is essential in algorithms and
different representations can result in different stability properties
for the problem, e.g.,~\cite{ScTuWominimal:07}. 
We let $\cL = \{X \in \Sn : \cA X = b\}$, where 
$\cA : \Sn\to \Rm$ is a given surjective (without loss of generality) linear
transformation;
$\cA X = (\trace A_iX)\in \Rm$ for given fixed linearly
independent $A_i\in \Sn, i=1,\ldots,m$.
We assume that $\cF$ is a nonempty feasible set;
it is called a \textdef{spectrahedron}.
Here the data of \BAP is $W,\cA,b$.

\index{$\cF = \cL\cap K$, feasible set}
\index{feasible set, $\cF = \cL\cap K$}

Nearest point problems are pervasive in the literature and are
often the essential step in feasiblity seeking problems,
e.g.,~\cite{MR4675665,MR3374759,MR3155358}. We study these
problems and show that they reveal hidden structure and information about
the stability and conditioning of feasible sets and the degeneracy of
optimal points.
Related convergence analysis and new types of
singularity degree are given in~\cite{DrusLiWolk:14,ImWolk:22}.
Recall that a \textdef{correlation matrix} 
is a positive semidefinite matrix with
diagonal all one. The set of correlation matrices is often called the
\textdef{elliptope}.
Finding the nearest correlation matrix is one application
\cite{MR3537883,MR1918653,MR2580548} that arises in many areas,
e.g.,~finance.
In addition, we specifically look at the feasible set of the
\SDP relaxation of the quadratic assignment problems \QAPp, 
which we call the \textdef{vontope}.
We characterize degeneracy of nearest points
and the resulting effects on stability of the nearest point algorithm
for these two special instances.

\subsubsection{Related Results}

The \BAP for the polyhedral case is studied in
\cite{CensorMoursiWeamsWolk:22} with application to linear programming.
(In the linear programming, \LPp, case, $\Sn \leftarrow \Rn, \Snp
\leftarrow \Rnp$.)
Generalized Jacobians play a critical role, though the relation to
stability is not studied.
The \SDP case is studied in
e.g.,~\cite{HenrionMalick:11,MR2112861}.
They use a quasi-Newton method to solve a dual problem similar to our
dual problem; though we use a regularized
semismooth Newton method with a  generalized Jacobian and illustrate
fast quadratic convergence for well-posed problems.
Further related results on spectral functions, projections, and
Jacobians, appear in \cite{MR2252652}.

In \cite{im2024implicit} it is shown that 
\emph{any} conic program that fails strict
feasibility has implicit redundancies and every point is degenerate.
Relationships with the Barvinok-Pataki bound and strengthened
bound~\cite{GPat:95,MR1797294,ImWolk:21}
for conic programs is discussed.
Further discussions on degeneracy related to 
loss of strict complementarity appear in \cite{MR3301316}.

\index{method of alternating projections}
The paper \cite{DrusLiWolk:14} provides a sublinear upper bound based on
the singularity degree for the convergence rate of the method of
alternating projections applied to spectrahedra. The 
\linelabel{line:citesingleton} \label{pg:citesingleton}
paper~\cite{MR4832118}
(preprint was published as we were finishing the preparation of this manuscript) furnishes analytic formulas for the sequence generated by alternative projection that reveal  that this upper bound can fail to be tight. Although this work is not restricted to the singleton case, much of the analysis is developed with emphasis on the singleton setting. 
Further results on accuracy and differentiability appear 
in~\cite{GOULART2020177,MR2252652}.

\subsection{Outline}

We continue in \Cref{sect:background} with the background of projections and an overview of \FR focused on the connections to degeneracy and strict feasibility. 
\Cref{sec:char_optCond} presents the equation to be solved using the
semismooth Newton root finding method, which is derived through a detailed examination of the optimality conditions along with notions on facial structure. 
We also derive an useful formula for the projection
onto a \emph{face} of the semidefinite cone.
In \Cref{sec:basicNewton} we introduce the semismooth Newton method for the \BAP
and derive Jacobians associated with the Newton method.
In \Cref{sec:FailureReg}, we investigate various phenomena arising in the absence of strict feasibility. This includes
the relationships between the eigenpairs of small eigenvalues of the Jacobian in the Newton method and identification of exposing vectors for \FR (\Cref{sec:Patho});
the singularity of the Jacobian and its link to degeneracy (\Cref{sec:JacoDegen}); and applications to the set of correlation matrices and the  feasible set of the \SDP relaxations of the \QAP~(\Cref{sec:degenofMCQAP}).
We present numerical experiments in~\Cref{sec:Numerics}
 with results and observations  discussed in \Cref{sec:FailureReg}. Our concluding remarks are in~\Cref{sect:concl}.

\section{Background}
\label{sect:background}
We first present some background on projections and 
related spectral functions, and then include the notions of facial
reduction for regularization, singularity, and degeneracy.

\subsection{Derivatives of Projection Operators}

We follow the work and notation in
\cite{Lew:96,Fr:81,MR2252652,ParikhBoyd:13,Yu2014ThePO}. We work with $f : \Sn \to \R \cup \{+\infty\}$, a closed proper 
extended valued convex function on $\Sn$.
We denote \textdef{$P_S$, projection onto a nonempty closed convex 
set $S$}, i.e.,
\[
P_S(W) = \argmin_{X\in S} \frac 12\|W-X\|^2.
\]
And for a convex set $S$, we denote the 
\textdef{indicator function, $\iota_S$}.
  The \emph{Moreau regularization of $\iota_{\Snn}$}, $\Snn:=-\Snp$,  (see \cite{ParikhBoyd:13}) is given by 
\begin{equation}
\label{eq:Deltaofz}
\Delta(X) = \min_{Z\in \Sn} \left\{\frac 12 \|X-Z\|^2 + \iota_{\Snn}(Z)\right\}.
\end{equation}

\index{projection onto closed convex set $S$, $P_S$}
\index{$\iota_S$, indicator function}
\index{$\cOn$, orthogonal matrices}
\index{orthogonal matrices, $\cOn$}

A \textdef{spectral function} $g:\Sn\to  \R\cup \{+\infty\}$ is one that is
invariant under orthogonal conjugation (congruence)
\[
g(X) = g(U^TXU), \, \forall\,  X\in \Sn, \forall\, U \in \cOn,
\]
where $\cOn$ is the set of orthogonal matrices of order $n$.
In \cite[Lemmas 2.3-4]{MR2252652}, it is shown that the Moreau
regularization, $\Delta(X)$, of $\iota_{\Snn}$ is a
\textdef{spectral function} with its gradient
given as
\index{Moreau regularization of $\iota_{\Snn}$, $\Delta(X)$}
\index{$\Delta(X)$, Moreau regularization of $\iota_{\Snn}$}
$\nabla \Delta (X) = P_{\Snp}(X)$.
See~\Cref{lem:diffspectr} below.
Let $\lambda: \Sn \to \Rn$ denote the eigenvalue function,
i.e.,~$\lambda(X)$ is the vector of eigenvalues of $X\in \Sn$ in  nonincreasing order.

\begin{lemma}[{\cite[Lemma 2.3]{MR2252652}}]
The function $\Delta$ in \cref{eq:Deltaofz} is the spectral function $\Delta = \delta \circ
\lambda$, where the function $\delta$ on $x\in \Rn$ is:
\linelabel{line:lemform} \label{pg:lemform}
\[
\delta(x) = \frac 12\sum_{i=1}^n \max\{0,x_i\}^2.
\]
\end{lemma}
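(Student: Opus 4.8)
The plan is to compute the Moreau regularization directly via the spectral decomposition and then invoke the standard reduction for spectral functions. First I would fix $Z \in \Sn$ with eigen-decomposition $Z = U(\Diag \mu)U^T$, $U \in \cOn$, and study the minimization $\Delta(Z) = \min_{X \in \Snn} \frac12\|Z - X\|^2$, where $\Snn = -\Snp$. The key observation is that the projection onto $\Snn$ is itself orthogonally invariant: for any $U \in \cOn$ one has $P_{\Snn}(U^TZU) = U^TP_{\Snn}(Z)U$, because $X \mapsto U^TXU$ is an isometry of $\Sn$ preserving the cone $\Snn$. Hence $\Delta(U^TZU) = \Delta(Z)$, so $\Delta$ is a spectral function and it suffices to evaluate it on diagonal matrices $\Diag(\mu)$.

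Next I would identify the value on diagonal arguments. The classical fact (Ky Fan / von Neumann trace inequality, or a direct use of the fact that $P_{\Snn}$ keeps the eigenbasis of $Z$) is that the nearest negative semidefinite matrix to $Z = U(\Diag\mu)U^T$ is $U(\Diag\min\{0,\mu\})U^T$, i.e.\ one truncates the positive eigenvalues to zero. Therefore
\[
\Delta(Z) \;=\; \tfrac12\bigl\|Z - U(\Diag\min\{0,\mu\})U^T\bigr\|^2 \;=\; \tfrac12\sum_{i=1}^n \bigl(\mu_i - \min\{0,\mu_i\}\bigr)^2 \;=\; \tfrac12\sum_{i=1}^n \max\{0,\mu_i\}^2 .
\]
Since the eigenvalues $\mu_i$ of $Z$ are exactly the components of $\lambda(Z)$ (up to ordering, which does not affect the symmetric sum), this is precisely $\delta(\lambda(Z))$ with $\delta(x) = \tfrac12\sum_{i=1}^n \max\{0,x_i\}^2$. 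One should check that $\delta$ is a genuine symmetric convex function on $\Rn$ (each $x \mapsto \max\{0,x\}^2$ is convex and nondecreasing in $|x|$ on the relevant half-line, and the sum is permutation-invariant), which is what licenses calling $\Delta = \delta \circ \lambda$ a spectral function in the sense of Lewis; but this was already granted by the cited Lemma~2.3 of \cite{MR2252652}.

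The only real subtlety — the \emph{main obstacle} — is justifying that the minimizer in $\min_{X\in\Snn}\frac12\|Z-X\|^2$ is attained at a matrix sharing the eigenbasis of $Z$, equivalently that $P_{\Snn}$ acts by eigenvalue truncation. This is the content of the spectral characterization of the projection onto the semidefinite cone; it can be proved by writing $\|Z - X\|^2 = \|Z\|^2 - 2\langle Z, X\rangle + \|X\|^2$ and applying von Neumann's trace inequality $\langle Z, X\rangle \le \lambda(Z)^T\lambda(X)$ to reduce to the separable diagonal problem $\min_{\nu \le 0}\sum_i (\mu_i - \nu_i)^2$, whose solution is $\nu_i = \min\{0,\mu_i\}$, with equality in von Neumann's inequality forcing $X$ to be simultaneously diagonalizable with $Z$. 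Everything after that reduction is the routine scalar computation displayed above. Since the statement is quoted verbatim from \cite[Lemma~2.3]{MR2252652}, I would in fact simply cite that reference for $\Delta = \delta\circ\lambda$ and present the eigenvalue-truncation computation as the short verification of the explicit form of $\delta$.
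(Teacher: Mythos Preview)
Your proposal is correct and follows essentially the same route as the paper: both compute $\Delta(Z)=\tfrac12\|Z-P_{\Snn}(Z)\|^2$ via the eigenvalue-truncation formula for the projection and arrive at $\tfrac12\sum_i\max\{0,\lambda_i(Z)\}^2$. The paper's version is simply terser, invoking the Moreau decomposition $Z-P_{\Snn}(Z)=P_{\Snp}(Z)$ and treating the Eckart--Young projection formula as known, whereas you spell out the orthogonal-invariance reduction and justify the truncation via von~Neumann's inequality.
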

\begin{proof}
We include the proof from \cite[Lemma 2.3]{MR2252652} for completeness.
For any $X\in \Sn$ we have
\[
\begin{array}{rcl}
\Delta(X) 
&=&
 \frac 12 \|X- P_{\Snn}(X)\|^2
\\&=&
 \frac 12 \| P_{\Snp}(X)\|^2
\\&=&
 \frac 12 \sum_{i=1}^n \max\{0,\lambda_i(X)\}^2
\\&=&
\delta(\lambda(X)).
\end{array}
\]
\samepage
\end{proof}

\begin{lemma}[{\cite[Lemma 2.4]{MR2252652}}]
\label{lem:diffspectr}
The function $\Delta$ in~\cref{eq:Deltaofz} is convex and differentiable.
Moreover, its gradient at $X\in \Sn$ is $P_{\Snp}(X)$, i.e.,
\[
\Delta(X)^\prime(dX) = \langle \nabla \Delta(X),dX\rangle =
         \langle P_{\Snp}(X),dX\rangle  =
         \trace P_{\Snp}(X)dX.
\]
\end{lemma}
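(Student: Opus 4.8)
The plan is to recognize $\Delta$ in \cref{eq:Deltaofz} as the Moreau envelope (with parameter $1$) of the indicator function $\iota_{\Snn}$ of the closed convex cone $\Snn=-\Snp$, and then to invoke two classical facts: (i) the Moreau envelope of a closed proper convex function is everywhere finite, convex, and continuously differentiable, with gradient $\nabla\Delta(Z)=Z-\prox_{\iota_{\Snn}}(Z)=Z-P_{\Snn}(Z)$, since the proximal operator of an indicator function is the metric projection; and (ii) Moreau's conic decomposition, which for a closed convex cone $K$ writes every $Z$ uniquely as $Z=P_K(Z)+P_{K^\circ}(Z)$ with $\langle P_K(Z),P_{K^\circ}(Z)\rangle=0$, where $K^\circ$ denotes the polar cone.

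First I would check that the infimum defining $\Delta(Z)$ is attained and unique: the objective $X\mapsto\tfrac12\|Z-X\|^2+\iota_{\Snn}(X)$ is proper, lower semicontinuous, and strongly convex on the nonempty closed set $\Snn$, so the minimizer exists, is unique, and equals $P_{\Snn}(Z)$; consequently $\Delta(Z)=\tfrac12\dist(Z,\Snn)^2$. Convexity of $\Delta$ then follows because it is the infimal convolution of the two proper convex functions $\tfrac12\|\cdot\|^2$ and $\iota_{\Snn}$, hence convex and finite everywhere. Differentiability, together with the formula $\nabla\Delta(Z)=Z-P_{\Snn}(Z)$, is the standard smoothing property of the Moreau envelope; a self-contained alternative is to use that $P_{\Snn}$ is firmly nonexpansive, so that $Z\mapsto\tfrac12\|Z-P_{\Snn}(Z)\|^2$ is $C^1$ with the stated gradient.

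To finish, I would identify the polar cone: since $\Snp$ is self-dual under the trace inner product, the polar of $\Snn=-\Snp$ is $(-\Snp)^\circ=\Snp$. Moreau's conic decomposition with $K=\Snn$ then yields $Z-P_{\Snn}(Z)=P_{\Snp}(Z)$, so $\nabla\Delta(Z)=P_{\Snp}(Z)$, and the displayed identities for the directional derivative follow at once because the inner product on $\Sn$ is the trace inner product. The one point that needs care is the sign bookkeeping relating the polar cone of $-\Snp$ to the self-duality of $\Snp$; everything else is a routine appeal to standard convex-analytic facts. As a cross-check one can instead argue from the preceding lemma via Lewis's differentiability theorem for spectral functions: $\delta(x)=\tfrac12\sum_i\max\{0,x_i\}^2$ is convex, symmetric, and differentiable with $\nabla\delta(x)=(\max\{0,x_i\})_{i=1}^n$, so $\Delta=\delta\circ\lambda$ is convex and differentiable with $\nabla\Delta(X)=U\Diag\!\big((\max\{0,\lambda_i(X)\})_{i=1}^n\big)U^T$ for any $U\in\cOn$ diagonalizing $X$, and this matrix is precisely $P_{\Snp}(X)$.
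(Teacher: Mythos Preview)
Your proposal is correct. The paper does not give a self-contained proof of this lemma; it cites the result and adds a Remark sketching the argument via the theory of spectral functions: since $\delta(x)=\tfrac12\sum_i\max\{0,x_i\}^2$ is differentiable and symmetric, $\Delta=\delta\circ\lambda$ is differentiable with $\nabla(\delta\circ\lambda)(X)=U\bigl(\Diag\nabla\delta(\lambda(X))\bigr)U^T=P_{\Snp}(X)$. This is precisely the cross-check you offer at the end.

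Your primary route, however, is genuinely different and in some ways cleaner: you treat $\Delta$ directly as the Moreau envelope of $\iota_{\Snn}$ and invoke the standard envelope facts (convexity, $C^1$-smoothness, $\nabla\Delta(Z)=Z-P_{\Snn}(Z)$), then finish with Moreau's conic decomposition $Z=P_{\Snn}(Z)+P_{\Snp}(Z)$. This avoids the machinery of Lewis-type differentiability of spectral functions entirely and works for any closed convex cone, not just $\Snp$. The spectral-function route, by contrast, is tailored to the matrix setting and makes transparent the eigenvalue description of the gradient, which is what the surrounding discussion in the paper actually uses. Either approach is fully adequate here; your sign bookkeeping for the polar of $-\Snp$ is correct.
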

\linelabel{line:spectrfns} \label{pg:spectrfns}
From spectral function theory e.g.,~\cite{Lew:96,Fr:81,MR2252652}, 
we know that the differentiability in~\Cref{lem:diffspectr}
follows from the differentiability of $\delta
\circ \lambda$. The formula for the derivative follows from the spectral
function formula with $X=U\Diag(\lambda(X))U^T$:
\[
\nabla (\delta\circ \lambda)(X) = U\left(\Diag \nabla
\delta(\lambda(X))\right)U^T, \  U \in \cOn.
\]
The derivative (Jacobian) of the projection can be found from the
Hessian of the regularization function
\[
P_{\Snp}^\prime(X) = \nabla^2 \Delta (X).
\]

\subsection{Facial Reduction}

The facial structure of cones plays an essential role when analyzing
various stability concepts.
In this section we study various properties that arise from the absence
of strict feasibility.  
\Cref{sec:RegforSD} presents the basics of the facial structure of $\Snp$ and the facial reduction process for $\cF$. 
In \Cref{sec:SDdiscussion} we revisit known notions of singularities 
and the length of the facial reduction process that connects to the dimension of the solution set of our problem \eqref{eq:DNNparam}.

\subsubsection{Facial Reduction Process}
\label{sec:RegforSD}

\index{face, $K\unlhd C$}
\index{$K\unlhd C$, face}
\index{proper face}
\index{conjugate face of $K$, $K^{\Delta}$}
\index{$C^+$, nonnegative polar cone of $C$}
\index{$C^{\perp}$, orthogonal complement of $C$}

We first review the basic facial structure of $\Snp$.
Recall that the convex cone~$f\subseteq K$ is a face of a convex cone $K\subseteq \Sn$, denoted by $f\unlhd K$, if 
\[
x,y \in K, z= x + y, z \in f \,\, \implies  \,\, x,y \in f.
\]
The cone $f$ is a proper face if $\{0\}  \subsetneq  f  \subsetneq K$.
Here we denote \textdef{$f^{\Delta}$, conjugate face of $f$}, defined as
\linelabel{line:nonnegpolar} \label{pg:nonnegpolar}
the intersection of the nonnegative polar cone with the orthogonal complement,
$f^{\Delta} = f^{\perp} \cap K^+$, where 
$K^+ :=\left\{  \phi : \langle \phi, k \rangle \geq 0, \forall k \in K\right\}$
is the \emph{nonnegative polar cone} of $K$.
The facial structure of $\Snp$ is well-studied 
and has an intuitive characterization. 
For any convex set $C\subseteq \Snp$, the minimal face of $\Snp$ containing $C$,
i.e.,~the intersection of all faces of $\Snp$ containing $C$, is denoted
$\face(C,\Snp)$. For the singleton $C=\{X\}\subseteq \Snp$, we get
\[
\textdef{$\face(X,\Snp)$} = \{ Y \in \Snp : \range(Y) \subseteq \range(X)  \}.
\]
Facial reduction (\FRp) for $\cF$, the constraint system in
\eqref{eq:DNNparam},  is the process of identifying the minimal face of $\Snp$ containing $\cF$. 
It is known that a point $\hat{X}$ in the relative interior of $\cF$, $\relint (\cF)$, provides the following characterization 
\[
\face(\hat{X},\Snp) = \face(\cF, \Snp). 
\]
Furthermore,  $\face(\hat{X},\Snp) = V \Srp V^T$ for some rank-$r$ matrix $V$ satisfying $\range(V) = \range(\hat{X})$. Such a matrix $V$ is called a
\textdef{facial range vector} for $\face(\hat{X},\Snp)$.

\index{relative interior, $\relint$}
\index{$\relint$, relative interior }

Finding $\face(\cF, \Snp)$ for an arbitrary $\cF$ analytically is a
challenging task. An alternative approach that is often used is to find the
minimal face numerically.
\Cref{prop:thmalt} below provides a tool for the \FR process.
Below, $\cA^*$ refers to the \textdef{adjoint} of $\cA$.

\index{facial reduction, \FRp}
\index{\FRp, facial reduction}

\begin{prop}[theorem of the alternative]
\label{prop:thmalt}
For the feasible constraint system $\cF$ defined in \cref{eq:DNNparam}, 
exactly one of the following statements holds:
\begin{enumerate}[(i)]
\item there exists $X \succ 0$ such that $X\in \cL$;
\item there exists $\lambda \in \Rm$ that satisfies
the \textdef{auxiliary system}
\end{enumerate}
\begin{equation}
\label{eq:AuxSys}
0\neq Z = \cA^*\lambda\succeq 0, \ \langle b, \lambda \rangle = 0 .
\end{equation}
\end{prop}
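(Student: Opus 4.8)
The plan is to prove the dichotomy in the classical way: first check that the two alternatives cannot hold simultaneously, then show that failure of the first forces the second by a separation argument. The mutual exclusivity is immediate. If there were $X \succ 0$ with $\cA X = b$ and at the same time $0 \neq Z = \cA^*\lambda \succeq 0$ with $\langle b, \lambda\rangle = 0$, then the adjoint identity would give
\[
0 < \langle Z, X\rangle = \langle \cA^*\lambda, X\rangle = \langle \lambda, \cA X\rangle = \langle b, \lambda\rangle = 0,
\]
where the strict inequality uses that a nonzero positive semidefinite matrix has strictly positive trace inner product with any positive definite matrix; this contradiction rules out coexistence.

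For the substantive direction I would assume item (1) fails, i.e., $\cL \cap \Snpp = \emptyset$, equivalently $b \notin \cA(\Snpp)$. Since $\cA$ is surjective it is an open map between finite-dimensional spaces, so $\cA(\Snpp)$ is a nonempty open convex subset of $\Rm$ not containing $b$, and the separating hyperplane theorem supplies $0 \neq \lambda \in \Rm$ with $\langle b, \lambda\rangle \leq \langle \cA^*\lambda, X\rangle$ for all $X \succ 0$. Writing $Z := \cA^*\lambda$, a scaling argument on this inequality (test it on $tX_0$ for fixed $X_0 \succ 0$; let $t\to\infty$ and use $\Snp = \cl\Snpp$) forces $Z \succeq 0$, while letting $t \to 0^+$ (or testing on $\varepsilon I$, $\varepsilon \downarrow 0$) forces $\langle b, \lambda\rangle \leq 0$.

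It then remains to upgrade $\langle b, \lambda\rangle \leq 0$ to equality and to certify $Z \neq 0$. For the equality I would invoke the standing assumption that $\cF = \cL \cap \Snp$ is nonempty: choosing $\bar X \in \cF$ gives $\langle b, \lambda\rangle = \langle \cA^*\lambda, \bar X\rangle = \langle Z, \bar X\rangle \geq 0$ because $Z, \bar X \succeq 0$, hence $\langle b, \lambda\rangle = 0$. For $Z \neq 0$, surjectivity of $\cA$ makes $\cA^*$ injective, so $Z = \cA^*\lambda = 0$ would force $\lambda = 0$, contradicting $\lambda \neq 0$; thus $\lambda$ satisfies the auxiliary system \eqref{eq:AuxSys}, which is item (2).

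The one step requiring care is the separation: I would reduce to separating the single point $b$ from the open convex set $\cA(\Snpp)$, using that a surjective linear map is open, rather than separating the affine manifold $\cL$ from the cone $\Snpp$ directly — the latter also works but then needs an extra homogenization step to eliminate a constant term. Everything else is routine bookkeeping with the adjoint identity $\langle \cA^*\lambda, X\rangle = \langle \lambda, \cA X\rangle$ and the order structure of $\Snp$. It is worth noting that the clean equality $\langle b, \lambda\rangle = 0$ genuinely depends on $\cF \neq \emptyset$; without feasibility one only obtains $\langle b, \lambda\rangle \leq 0$ and the dichotomy as stated need not hold.
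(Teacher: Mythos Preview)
The paper does not actually supply a proof of this proposition; it is stated as a known ``theorem of the alternative'' and used as a black box for the facial reduction machinery. Your argument is a correct, self-contained proof via separation: the mutual exclusivity step is fine, and in the main direction your use of surjectivity of $\cA$ (so that $\cA(\Snpp)$ is open and $\cA^*$ is injective) together with the feasibility assumption $\cF\neq\emptyset$ to upgrade $\langle b,\lambda\rangle\le 0$ to equality and to ensure $Z\neq 0$ is exactly what is needed. Your closing remark that the equality $\langle b,\lambda\rangle=0$ genuinely relies on $\cF\neq\emptyset$ is also accurate and worth keeping.
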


The vector $Z$ in \cref{eq:AuxSys} is called an
\textdef{exposing vector} for the face $(Z^\perp \cap \Snp) \supseteq
\cF$, as $X\in \cF$ implies 
\[
0=\langle b, \lambda \rangle = \langle \cA X,\lambda \rangle =  \langle
X, \cA^*\lambda \rangle = \langle X,Z\rangle.
\]
Here the face $(Z^\perp \cap \Snp)$ is 
exposed by the hyperplane $Z^\perp = \{Z\}^\perp$, and this allows for
a simplified expression of feasible points.
This restriction results in an equivalent strictly smaller dimensional problem.
Finding a solution to \eqref{eq:AuxSys} does not necessarily lead
directly to the minimal face, i.e.,~we can have 
$(Z^\perp \cap \Snp) \supsetneq \face(\cF,\Snp)$.
For such a case, the process is reapplied until $\face(\cF,\Snp)$ is
found. The number of times the system \eqref{eq:AuxSys} needs to be
solved varies, though it is at most $\min\{m,n\}$,
see~\Cref{sec:SDdiscussion} below.
A reader may refer to \Cref{example:DiffSolSet} for a brief illustration of how the theorem of the alternative is used for the \FR process.

\subsubsection{Three Notions of Singularity Degree}
\label{sec:SDdiscussion}

We now exhibit some properties that originate from the
number of \FR iterations.

\index{$\sd(\cF)$, singularity degree of $\cF$}
\index{singularity degree of $\cF$, $\sd(\cF)$}
\index{$\maxsd(\cF)$, max-singularity degree of $\cF$}
\index{max-singularity degree of $\cF$, $\maxsd(\cF)$}
\begin{definition}
\label{def:maxsd}
The \emph{singularity degree} of $\cF$, denoted $\sd(\cF)$, is the
\underline{minimum} number of \FR
iterations for finding $\face(\cF, \Snp)$.
The \emph{maximum singularity degree} of $\cF$, denoted
$\maxsd(\cF)$, is the \underline{maximum} number of nontrivial \FR iterations for finding~$\face(\cF, \Snp)$.
\end{definition}
\index{\FRp, facial reduction} 

\index{$\iips$, implicit problem singularity} 
\index{implicit problem singularity, $\iips$} 

The singularity degree is often used to relate error bounds to explain
the difficulty of solving problems numerically; see~\cite{S98lmi,SWW:17}.
It is known that a high singularity degree results in a worse 
\emph{forward error bound} relative to the backward errors.
The maximum singularity degree is a relatively new notion, and this
motivates the  idea of \emph{implicit problem singularity, $\iips(\cF)$}.
Every nontrivial step of \FR results in redundant linear constraints.
More specifically, \FR reveals redundant equalities in $\cA(\cdot)=b$; 
see \cite{HaesolIm:2022}.
The total number of these implicitly redundant constraints is called
$\iips(\cF)$ and a short argument shows that $\iips(\cF) \geq \maxsd(\cF)$.
\Cref{prop:FRsequence} below 
uses $\maxsd(\cF)$ to extend the result in  \cite[Lemma 3.5.2]{Sremac:2019}
and shows an interesting property that a \FR
sequence generates.\footnote{We note that the concepts of $\maxsd(\cF), \iips(\cF)$
did not yet exist in \cite{Sremac:2019}. Moreover, it is shown
empirically in \cite{HaesolIm:2022} that $\iips$ is directly related to
the forward error for \LPp s.}

\linelabel{line:propFR} \label{pg:propFR}
\begin{prop}\cite[Theorem 1]{liu2018exact},\cite[Lemma 3.5.2]{Sremac:2019}
\label{prop:FRsequence}
Let the exposing vector obtained in the $i$-th \FR iteration be
nontrivial, $Z^i = \cA^*(\lambda^i)\neq 0, i=1,\ldots,k\leq\maxsd(\cF)$.
Then the vectors, $\lambda^1, \lambda^2, \ldots, \lambda^k$, 
are linearly independent.
\end{prop}
We relate \Cref{prop:FRsequence} to the dimension of the set of the roots that arise in the algorithm for solving the \BAPp, 
e.g.,~see~\Cref{thm:maxsdSolutions}.

\subsection{Degeneracy and Relations to Strict Feasibility}
\label{sec:degen}

Many of the concepts of degeneracy arise from the early work with the simplex
method for linear programs. The \emph{stalling} phenomenon of the simplex
method is a well-known subject of research 
\cite{MR0056264,MR1885204,MR850380} and
many methods are proposed to overcome this difficulty.
In this section we use a generalized definition of degeneracy proposed by 
Pataki \cite{PatakiSVW:99} to extend the discussion to
spectrahedra. We then examine a connection between the \textdef{Slater
constraint qualification} (strict feasibility), and degeneracy 
of feasible points. 
We identify a type of degeneracy that inevitably arises in the absence of strict feasibility.

\begin{definition}\cite[Definition 3.3.1]{PatakiSVW:99}
\label{def:NondegenDef}
A point ${X} \in \cF$ is called \textdef{nondegenerate} if
\[
\lin ( \face({X}, \Snp )^{\Delta}) \cap \range (\cA^*) = \{0\}.
\] 
\index{degenerate point}
\end{definition}
\Cref{def:NondegenDef} immediately yields \Cref{lemma:NondegenChar}.
\begin{lemma}\cite[Corollary 3.3.2]{PatakiSVW:99}
\label{lemma:NondegenChar}
Let ${X} \in \cF$ and let
\begin{equation}
\label{eq:spectrX}
{X} = \begin{bmatrix} V &\bar{V} \end{bmatrix}
\begin{bmatrix} D & 0 \\ 0 & 0 \end{bmatrix}
\begin{bmatrix} V &\bar{V} \end{bmatrix}^T, \, D\succ 0,
\end{equation}
be a spectral decomposition of ${X}$. Then 
\begin{equation}
\label{eq:RorateAis}
\begin{array}{c}
	{X} \text{  is a nondegenerate point of $\cF$}
\\ \text{if, and only if,}
\\\left\{
\begin{bmatrix}
V^T A_i V & V^T A_i \bar{V} \\
\bar{V}^T A_i V & 0
\end{bmatrix}
\right\}_{i=1}^m 
\text{  is a set of linearly independent matrices in  } \Sn.
\end{array}
\end{equation}
\end{lemma}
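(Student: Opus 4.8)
The plan is to unwind Definition~\ref{def:NondegenDef} by explicitly computing $\face(X,\Snp)$, then its conjugate face $\face(X,\Snp)^\Delta$, then the linear span $\lin(\face(X,\Snp)^\Delta)$, and finally to recognize the condition $\lin(\face(X,\Snp)^\Delta)\cap\range(\cA^*)=\{0\}$ as precisely the linear independence assertion in~\eqref{eq:RorateAis}. Throughout, \eqref{eq:spectrX} gives $\range X=\range V$ with $\begin{bmatrix}V&\bar V\end{bmatrix}\in\cOn$, and I work in the orthonormal block basis given by the columns of $V$ (there are $r$ of them, $r$ being the rank of $X$) and of $\bar V$.

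First I would use the known facial characterization of $\Snp$: since $\range X=\range V$,
\[
\face(X,\Snp)=\{Y\in\Snp:\range Y\subseteq\range V\}=V\Srp V^T,
\]
which spans the subspace $V\Sr V^T$. Hence, using self-duality of $\Snp$ (so that $K^+=\Snp$ in the definition of $f^\Delta$),
\[
\face(X,\Snp)^\Delta=\{Z\in\Snp:V^TZV=0\}=\{Z\in\Snp:\range Z\subseteq\range\bar V\}=\bar V\Snrp\bar V^T,
\]
where the second equality holds because $V^TZV=0$ with $Z\succeq0$ forces $ZV=0$. Since the span of $\Snrp$ is all of $\Snr$,
\[
\lin(\face(X,\Snp)^\Delta)=\bar V\Snr\bar V^T=\{\bar V S\bar V^T:S\in\Snr\};
\]
in the block basis $\begin{bmatrix}V&\bar V\end{bmatrix}$ this is exactly the set of symmetric matrices whose $(1,1)$ and $(1,2)$ blocks are zero.

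The core step is then a direct block-matrix translation. For $y\in\Rm$, put $M:=\cA^*y=\sum_{i=1}^m y_iA_i$ and expand it in the block basis,
\[
M=\begin{bmatrix}V^TMV & V^TM\bar V\\ \bar V^TMV & \bar V^TM\bar V\end{bmatrix}.
\]
Then $M\in\lin(\face(X,\Snp)^\Delta)$ if and only if the $(1,1)$ and $(1,2)$ blocks of $M$ vanish (the $(2,2)$ block being unconstrained), i.e. if and only if
\[
\sum_{i=1}^m y_i\begin{bmatrix}V^TA_iV & V^TA_i\bar V\\ \bar V^TA_iV & 0\end{bmatrix}=0.
\]
Thus $\lin(\face(X,\Snp)^\Delta)\cap\range(\cA^*)=\{0\}$ --- i.e. $X$ is nondegenerate --- holds precisely when this last linear combination forces $y=0$, which is the statement that the $m$ matrices appearing in~\eqref{eq:RorateAis} are linearly independent in $\Sn$. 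The whole argument is essentially bookkeeping; the only spot that needs care is the conjugate-face computation, in particular remembering that $\lin$ of the PSD cone $\bar V\Snrp\bar V^T$ is the \emph{full} symmetric subspace $\bar V\Snr\bar V^T$ rather than the cone itself. I do not anticipate any substantive obstacle.
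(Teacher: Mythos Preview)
Your proposal is correct. The paper itself does not supply a proof of this lemma: it is quoted directly from \cite[Corollary 3.3.2]{SaVaWo:97} and then used without argument, so there is no ``paper's own proof'' to compare against. Your derivation is the standard one and matches what one finds in that reference: identify $\face(X,\Snp)=V\Srp V^T$, take the conjugate face to get $\bar V\Snrp\bar V^T$, span it to $\bar V\Snr\bar V^T$, and read off the block condition. One small point worth making explicit: in the forward direction (nondegenerate $\Rightarrow$ linear independence) you tacitly use that $\cA^*y=0$ forces $y=0$; this is exactly the paper's standing assumption that $\cA$ is surjective (equivalently, that the $A_i$ are linearly independent), so there is no gap, but it is worth flagging since without it the equivalence can fail.
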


\index{triangular number, $t(n) = n(n+1)/2$}

\begin{remark}
\label{rem:characdeggeneric}
The degeneracy of a point $X\in \cF$  can be identified by constructing a matrix 
using the characterization~\eqref{eq:RorateAis}.
We let $W_i:=
\begin{bmatrix}
V^T A_i V & V^T A_i \bar{V} \\
\bar{V}^T A_i V & 0
\end{bmatrix},\, i=1,\ldots,m$ and 
denote \textdef{$t(n) = n(n+1)/2$, triangular number}.
We let \textdef{$\vec$} denote the vectorization of a matrix by column;
and \textdef{$\svec : \Sk \to \R^{t(k)}$}
 denotes the isometry that vectorizes a
symmetric matrix using the upper triangular part after multiplying the
strict upper-triangular part by $\sqrt 2$. 
We construct the following matrix $L \in \R^{t(n)\times m}$ with an
appropriate permutation matrix $\Pi $ and the $i$-th column:
\[
Le_i = \textdef{$\Pi \svec W_i$}=
\begin{pmatrix}\svec V^T A_i V 
      \cr \sqrt 2 \kvec V^T A_i \bar{V} \cr
\svec 0 
\end{pmatrix}, \ \forall i \in \{1,\ldots, m\},
\]
where $V$ and $\bar{V}$ are given in \Cref{lemma:NondegenChar}.
Consider the matrix $\bar{L}$, with the $i$-th column   
$\bar{L} e_i = 
\begin{pmatrix}\svec V^T A_i V 
      \cr \sqrt 2\kvec V^T A_i \bar{V} \cr
\svec \bar{V}^T A_i \bar{V}
\end{pmatrix}$. Note that $\bar{L}$ is full-column rank given $\cA$ is surjective.
The matrix $L$ is obtained after zeroing out the last $t(\Nullity(X))$ rows of $\bar{L}$.
We note that $\rank(L)< \rank(\bar{L})$ (i.e., degeneracy holds) 
if, and only if, the orthogonal complement of the
span of the first
 $t(n)-t(\Nullity(X))$ rows of $\bar{L}$ has nonzero intersection with the span of the remaining rows
that are then changed to $0$.
Therefore, if $t(n) > m+t(\Nullity(X))$, then generically nondegeneracy holds.
\end{remark}

\linelabel{lem:AiVdep} \label{pg:AiVdep}
\begin{theorem}
\label{thm:AiVdep}
Suppose that $\cF$ fails strict feasibility and let $X= VDV^T \in \cF$
be found using~\cref{eq:spectrX}.
Then the set $\{A_iV\}_{i=1}^m$ is linearly dependent.
In particular, any solution $\lambda$ to~\eqref{eq:AuxSys} certifies the linear dependence of the set $\{A_iV\}_{i=1}^m$.
Thus every point in $\cF$ is degenerate. 
\end{theorem}
\begin{proof}
Suppose that $\cF$ fails strict feasibility. 
Let $X= VDV^T \in \cF$ and let $\lambda$ be a solution to the 
\textdef{auxiliary system} \eqref{eq:AuxSys}.  
Then~$\cA^*(\lambda)$ is an exposing vector and hence 
the inner product $\langle \cA^*(\lambda),X\rangle = 0$ which yields
\begin{equation}
\label{eq:AiV_relation}
0=\cA^*(\lambda)X= \left(\cA^*(\lambda)V\right)DV^T \implies
   0=\cA^*(\lambda) V = \sum_{i=1}^m \lambda_i \left(A_i V\right) .
\end{equation}
Since $\lambda$ is a nonzero vector, \eqref{eq:AiV_relation} shows the
linear dependence.

The linear dependence of the set $\{A_iV\}_{i=1}^m$ shown above
allows for verifying \emph{total} degeneracy that occurs in the absence
of strict feasibility of $\cF$.
Then the above provides $\sum_{i=1}^m \lambda_i A_i V =0,
\lambda \neq 0$.
We observe that
\[
\begin{array}{rclcl}
0 
& = &  V^T \left( \sum_{i=1}^m \lambda_i A_i V \right)  
& = & \sum_{i=1}^m \lambda_i V^T A_i V , \\
0 
& = &  \bar{V}^T \left( \sum_{i=1}^m \lambda_i A_i V \right)  
& = & \sum_{i=1}^m \lambda_i \bar{V}^T A_i V  . \\
\end{array}
\]
This immediately implies that the matrices in \eqref{eq:RorateAis} are linearly dependent and hence $X$ is degenerate. 
\samepage
\end{proof}

In \Cref{coro:degen_coros} we now connect nondegeneracy to strict feasibility. 
\begin{corollary}
\label{coro:degen_coros}
Let $\cF$ be given. Then the following holds:
\begin{enumerate}[(i)]
\item \label{item:FSlaterwhenNondeg}
If $\cF$ contains a nondegenerate point, then strict feasibility holds.
\item \label{item:posXnondegen}
Every $X\in \cF\cap \Snpp$ is nondegenerate.
\end{enumerate}
\end{corollary}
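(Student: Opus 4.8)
The plan is to read off both items from results already in hand; no genuinely new computation is required. For the first item, I would simply take the contrapositive of \Cref{thm:noSlaterDegen}: that theorem shows that if $\cF$ fails strict feasibility then \emph{every} point of $\cF$ is degenerate, so the presence of even a single nondegenerate point in $\cF$ forbids this, and strict feasibility must therefore hold. Nothing further is needed for item~\ref{item:FSlaterwhenNondeg}.

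For the second item I would argue straight from \Cref{def:NondegenDef}. If $X\in\cF\cap\Snpp$ then $X\succ 0$ has full range, so by the characterization of the minimal face, $\face(X,\Snp)=\{Y\in\Snp:\range Y\subseteq\range X\}=\Snp$. Hence its conjugate face is $\face(X,\Snp)^{\Delta}=\Snp^{\perp}\cap\Snp^{+}$; since $\Snp$ has nonempty interior in $\Sn$ it spans $\Sn$, so $\Snp^{\perp}=\{0\}$ and thus $\face(X,\Snp)^{\Delta}=\{0\}$, giving $\lin(\face(X,\Snp)^{\Delta})=\{0\}$. Intersecting with $\range(\cA^*)$ trivially returns $\{0\}$, which is precisely the nondegeneracy condition, so $X$ is nondegenerate.

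As a cross-check I would also record the matrix-level argument via \Cref{lemma:NondegenChar}: when $X\succ 0$ the block $\bar V$ in the spectral decomposition \eqref{eq:spectrX} is vacuous, $V\in\cOn$, and the family \eqref{eq:RorateAis} collapses to $\{V^TA_iV\}_{i=1}^m$. Because $V$ is invertible, the congruence $Y\mapsto V^TYV$ is a linear automorphism of $\Sn$ and therefore preserves linear independence; the $A_i$ are linearly independent by the standing (surjectivity) hypothesis on $\cA$, hence so are the $V^TA_iV$, and \Cref{lemma:NondegenChar} yields nondegeneracy of $X$.

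I do not expect any real obstacle here, since the corollary is essentially a repackaging of \Cref{thm:noSlaterDegen} together with the facial-structure facts. The only point deserving a line of care is the vacuous $\bar V$ block in \eqref{eq:spectrX} in the positive definite case, which is why I would present the coordinate-free computation through \Cref{def:NondegenDef} as the primary argument and keep the matrix version of \Cref{lemma:NondegenChar} as a corroborating remark.
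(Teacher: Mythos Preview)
Your proposal is correct and mirrors the paper's own proof: the paper also obtains \Cref{item:FSlaterwhenNondeg} as the contrapositive of \Cref{thm:noSlaterDegen} and derives \Cref{item:posXnondegen} directly from \Cref{def:NondegenDef} via $\face(X,\Snp)^{\Delta}=\{0\}$ for $X\succ 0$. Your additional matrix-level cross-check through \Cref{lemma:NondegenChar} is a harmless embellishment not present in the paper.
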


\begin{proof}
\Cref{item:FSlaterwhenNondeg} is the contrapositive of \Cref{thm:AiVdep}.
\Cref{item:posXnondegen} is immediate from the definition of
nondegeneracy,~\Cref{def:NondegenDef}, since 
$\face(X,\Snp)^\Delta = 0$, for all $X\succ 0$.
\samepage
\end{proof}

\Cref{prop:twonondeg,prop:relintNodegen} below provide sufficient
conditions for the nondegeneracy of certain points of spectrahedra. They
enable identifying nearest points where the semismooth Newton method for the \BAP performs effectively.

\begin{prop}
\label{prop:twonondeg}
Let $X_1, X_2\in \cF$  and let $X_1$ be a nondegenerate point. 
Then, $\gamma X_1 + (1-\gamma) X_2$ is a nondegenerate point of $\cF$,
for all $\gamma \in (0,1]$.
\end{prop}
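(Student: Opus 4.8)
The plan is to argue directly from \Cref{def:NondegenDef}, exploiting the monotonicity of the face map $X\mapsto \face(X,\Snp)$ under range inclusion together with the inclusion-reversing behaviour of the conjugate-face operation on $\Snp$. Write $X_\gamma := \gamma X_1 + (1-\gamma)X_2$ for $\gamma\in(0,1]$; since $\cF = \cL\cap \Snp$ is convex we have $X_\gamma\in \cF$, so only nondegeneracy of $X_\gamma$ needs to be established.

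First I would record the range inclusion $\range(X_1)\subseteq \range(X_\gamma)$. For $\gamma=1$ this is an equality. For $\gamma\in(0,1)$ the matrices $\gamma X_1$ and $(1-\gamma)X_2$ are both positive semidefinite, and for a sum of positive semidefinite matrices $\ker(\gamma X_1 + (1-\gamma)X_2) = \ker(X_1)\cap \ker(X_2)$ (since $x^T X_\gamma x = 0$ forces $x^T X_1 x = x^T X_2 x = 0$, hence $X_1 x = X_2 x = 0$); taking orthogonal complements gives $\range(X_\gamma) = \range(X_1)+\range(X_2)\supseteq \range(X_1)$. Using the standard characterization $\face(X,\Snp) = \{Y\in\Snp : \range Y\subseteq \range X\}$, this yields $\face(X_1,\Snp)\subseteq \face(X_\gamma,\Snp)$.

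Next I would pass to conjugate faces: if $f_1\subseteq f_2$ are faces of $\Snp$, then $f_2^\perp\subseteq f_1^\perp$, hence $f_2^\Delta = f_2^\perp\cap \Snp \subseteq f_1^\perp\cap \Snp = f_1^\Delta$ (using that $\Snp$ is self-dual, $(\Snp)^+=\Snp$), and therefore $\lin(f_2^\Delta)\subseteq \lin(f_1^\Delta)$ since linear span is monotone. Taking $f_1 = \face(X_1,\Snp)$ and $f_2 = \face(X_\gamma,\Snp)$ gives $\lin(\face(X_\gamma,\Snp)^\Delta)\subseteq \lin(\face(X_1,\Snp)^\Delta)$. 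Intersecting with $\range(\cA^*)$ and invoking nondegeneracy of $X_1$,
\[
\lin\big(\face(X_\gamma,\Snp)^\Delta\big)\cap \range(\cA^*)\ \subseteq\ \lin\big(\face(X_1,\Snp)^\Delta\big)\cap \range(\cA^*) = \{0\},
\]
which is precisely nondegeneracy of $X_\gamma$ by \Cref{def:NondegenDef}.

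I do not expect a real obstacle here; the one point deserving care is that $\gamma>0$ is genuinely used in the range inclusion (at $\gamma=0$ it can fail, since then $X_\gamma=X_2$ may have a strictly smaller range), and that the chain of inclusions is oriented so that conjugation flips it in the favourable direction (small face for $X_1$ $\Rightarrow$ large conjugate face $\Rightarrow$ its span still meets $\range(\cA^*)$ only at $0$). One could alternatively run the argument through the linear-independence characterization \Cref{lemma:NondegenChar}, but the route via the definition is shorter.
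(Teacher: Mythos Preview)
Your proposal is correct and follows essentially the same route as the paper: both establish $\face(X_1,\Snp)\subseteq\face(X_\gamma,\Snp)$, reverse the inclusion under the conjugate-face operation, pass to linear spans, and intersect with $\range(\cA^*)$. You supply more detail than the paper (the kernel/range argument for sums of PSD matrices and the explicit verification that conjugation reverses inclusions), but the skeleton is identical.
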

\begin{proof}
Let $X_1, X_2\in \cF$  and let $X_1$ be a nondegenerate point. 
Let $\gamma \in (0,1]$ and  $X^\prime = \gamma X_1 + (1-\gamma) X_2$.
We observe that
\[
\begin{array}{crcl}
&\face(X_1,\Snp) &\subseteq& \face(X^\prime, \Snp) \\
\implies &  
\face(X_1,\Snp)^{\Delta}  &\supseteq& \face(X^\prime, \Snp)^{\Delta} \\
\implies &  
\lin \left( \face(X_1,\Snp)^{\Delta}  \right) &\supseteq & \lin \left( \face(X^\prime, \Snp)^{\Delta} \right) \\
\implies &  
\lin \left( \face(X_1,\Snp)^{\Delta}  \right) \cap \range(\cA^*) &\supseteq & \lin \left( \face(X^\prime, \Snp)^{\Delta} \right) \cap \range(\cA^*) .\\
\end{array}
\]
Since $X_1$ is a nondegenerate point, we have $\lin \left( \face(X_1,\Snp)^{\Delta}  \right) \cap \range(\cA^*) = \{0\}$.
Thus,~$X^\prime$ is a nondegenerate point.
\samepage
\end{proof}

\begin{prop} 
\label{prop:relintNodegen}
Let $f$ be a face of $\cF$ containing a nondegenerate point. Then every point in $\relint(f)$ is nondegenerate.  
\end{prop}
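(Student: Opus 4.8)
The plan is to reduce the statement to \Cref{prop:twonondeg} by means of the standard prolongation property of relative interiors. First I would fix an arbitrary point $X^* \in \relint(f)$ and, using the hypothesis, a nondegenerate point $X_1 \in f$. Since $f$ is a face of the convex set $\cF$, it is itself convex, so the line-segment (prolongation) principle for relative interiors applies inside the affine hull of $f$: there is $\varepsilon > 0$ with
\[
X_2 := X^* + \varepsilon\,(X^* - X_1) \in f .
\]
Rearranging this gives
\[
X^* = \gamma X_1 + (1-\gamma) X_2, \qquad \gamma := \frac{\varepsilon}{1+\varepsilon} \in (0,1),
\]
so $X^*$ is a genuine convex combination, with strictly positive coefficient on $X_1$, of the nondegenerate point $X_1$ and a point $X_2 \in f \subseteq \cF$.

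Next I would invoke \Cref{prop:twonondeg} with this $X_1$, $X_2 \in \cF$ and with $\gamma \in (0,1] $: since $X_1$ is nondegenerate and $\gamma \in (0,1]$, the point $\gamma X_1 + (1-\gamma) X_2 = X^*$ is a nondegenerate point of $\cF$. As $X^* \in \relint(f)$ was arbitrary, every point of $\relint(f)$ is nondegenerate, which is the claim.

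The only step that needs care is the first one, namely writing $X^*$ as a convex combination of $X_1$ and some other point of $f$ with a positive weight on $X_1$; this is precisely the prolongation lemma for relative interiors of convex sets, and it requires only that $f$ be convex (true because faces of convex sets are convex) together with $X^* \in \relint(f)$ and $X_1 \in f$. No closedness of $f$ is needed, and \Cref{prop:twonondeg} supplies the rest, so I do not anticipate a genuine obstacle.
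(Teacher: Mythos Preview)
Your proof is correct and follows essentially the same approach as the paper: the paper also fixes a nondegenerate $X_1\in f$, uses the relative-interior prolongation to place an arbitrary $X\in\relint(f)$ strictly between $X_1$ and some $X_2\in f$, and then invokes \Cref{prop:twonondeg}. Your write-up is simply more explicit about the prolongation step and the resulting convex coefficients.
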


\begin{proof}
Let $X_1\in f$ be a nondegenerate point. For any $X \in\relint(f)$ there exists $X_2$ such that $X$ belongs to the segment $(X_1,X_2)$. The nondegeneracy of $X$ then follows from  \Cref{prop:twonondeg}.
\samepage
\end{proof}

\section{Characterization of Optimality for the \BAP}
\label{sec:char_optCond}

We now study the optimality conditions of the \BAP \Cref{eq:DNNparam} and present several properties,
including an equation for the application of Newton's method.

\begin{theorem}
\label{th:duality}
Consider the projection problem \cref{eq:DNNparam}. Then the following hold:
\begin{enumerate}[(i)]
 \item\label{it:thduality-1} $p^*$ is finite and the optimum $X^*$ exists and is unique.
 \item\label{it:thduality-2} There is a zero duality gap between the
primal and the dual problem of~\cref{eq:DNNparam}, where the Lagrangian dual is
the maximization of the dual functional, $\phi(y,Z)$, i.e.,
  \index{dual functional, $\phi(y,Z)$}
  \index{$\phi(y,Z)$, dual functional}
\begin{equation}
\label{eq:zerogap}
\begin{array}{rcl}
  p^* = d^* &=&   \max_{Z\succeq 0, y\in\Rm}  -\frac 1 2
\|Z+\cA^* y\|^2+\langle y, b-\cA W \rangle -  \langle Z, W\rangle.
\end{array}
\end{equation}
 \item\label{it:thduality-3} 
Strong duality (zero duality gap and dual attainment) 
holds in~\Cref{eq:DNNparam} if, and only if, there exists a root
$\bar y, F(\bar y)=0$, of the function 
\index{$F( y): = \cA P_{\Snp}(W+\cA^*  y) - b$}
\begin{equation}
\label{eq:def_of_F}
F( y): = \cA P_{\Snp}(W+\cA^*  y) - b.
\end{equation}
Moreover, in this case the solution to the primal 
problem is given by
\linelabel{line:Zpsdnot} \label{pg:Zpsdnot}
\[
X^* = P_{\Snp}(W+\cA^* \bar y).
\]
\end{enumerate}
\end{theorem}
\begin{proof}
\Cref{it:thduality-1}: The primal problem~\cref{eq:DNNparam} is
the minimization of a strongly convex function over a nonempty closed
convex set. This yields that the optimal value is finite and  is
attained at a unique point.

\Cref{it:thduality-2}: Since the primal objective function is
coercive, there is a zero duality gap, see 
e.g.,~\cite[Theorem 5.4.1]{MR1931309}.
Let $Z\succeq 0$. The Lagrangian function of problem~\cref{eq:DNNparam}
and its gradient are given by
\[
L(X,y,Z) = \frac 12\|X-W\|^2 +\langle y,b-\cA X\rangle -\langle Z,X\rangle, 
\quad
\nabla_X L(X,y,Z) = X-W -\cA^*y  - Z.
\]
It follows that $X$ is a stationary point of the Lagrangian if 
\begin{equation}
\label{eq:elimXfromLagr}
X = W + \cA^*y+Z.
\end{equation}
\label{pg:inff=g}
\linelabel{line:inff=g}
By means of this equality, we can eliminate $X$ and
 write the Lagrangian dual as the following maximization problem in
$Z,y$.
	\begin{equation*}
\begin{aligned}
d^* &= \max_{Z\succeq 0,y} \min_{X} L(X,y,Z) \\
& =   \max_{Z\succeq 0,y} \min_{X} \frac{1}{2}\|X-W\|^2+\langle y,b-\cA X\rangle-\langle Z,X\rangle \\
&=\max\limits_{Z\succeq 0,y,X}\left\{ \frac{1}{2}\|X-W\|^2+\langle y,b-\cA X\rangle-\langle Z,X\rangle: {\nabla_X} L(X,y,Z) = 0 \right\}\\
&= \max_{Z\succeq 0,y} -\frac{1}{2}\|Z+\cA^*y\|^2+\langle y,b-\cA W
\rangle-\langle Z, W\rangle.
\end{aligned}
\end{equation*}

\Cref{it:thduality-3}: Let $\bar X$ be the unique optimal solution,
as found by the above.
Then strong duality holds if, and only if, there exists
$(\bar{y},\bar{Z})$ such that the following \textdef{Karush-Kuhn-Tucker, \KKTp} conditions hold:
\begin{equation}
\label{eq:KKTprobone}
\begin{array}{cll}
\bar X-W-\cA^*\bar y- \bar Z= 0, & \bar Z\succeq 0, & \text{(dual feasibility),}
\\ \cA \bar X-b = 0, & \bar X\succeq 0, & \text{(primal feasibility),}
\\ \langle \bar Z, \bar X \rangle= 0, &  & \text{(complementary slackness).}
\end{array}
\end{equation}
Note that the complementary slackness condition and the fact that $\bar
X,\bar Z\succeq 0$ yield
\begin{equation}\label{eq:MoreauDe}
P_{\Snp}(W+\cA^*\bar y) = \bar X \text{ and } P_{\Snn}(W+\cA^* \bar y)
    = -\bar Z,
\end{equation}
due to $\bar X+(-\bar Z)=W+ \cA^*\bar y$ being the Moreau decomposition.
Finally, substituting $\bar X = P_{\Snp}(W+\cA^* \bar y)$ in
the primal feasibility condition, we conclude that the \KKT conditions
imply $F(\bar y) = \cA P_{\Snp}(W+\cA^* \bar y)-b = 0$.

Conversely, directly from the Moreau decomposition theorem, if
we are given some $\bar y \in Y$ satisfying $F(\bar y)=0$, then the tuple
$(\bar X, \bar y, \bar Z)$, with $\bar X$ and $\bar Z$ defined as
in~\cref{eq:MoreauDe}, satisfies the above \KKT conditions.
\samepage
\end{proof}

\Cref{th:duality}~\ref{it:thduality-3} shows how to obtain a solution to the primal problem~\Cref{eq:DNNparam} from a root  of $F$. In addition, the pair $(\bar y, \bar Z)$, with
\[
 \bar Z =  -P_{\Snn}(W+\cA^* \bar y),
\]  
is a dual optimal solution of the dual problem~\cref{eq:zerogap}.
This fact immediately follows from the
proof of~\Cref{th:duality}~\ref{it:thduality-3}, where we showed that the
tuple $(\bar X, \bar y, \bar Z)$ satisfies the \KKT conditions of
problem~\Cref{eq:DNNparam}.

\subsection{Regularization for Strong Duality}

The projection problem \eqref{eq:DNNparam} always admits a solution 
given that the feasible set $\cF$ is nonempty.
If the dual \eqref{eq:zerogap} of \eqref{eq:DNNparam} has an optimal solution, 
one can verify that the system given by the function in \eqref{eq:def_of_F}, i.e.,
\[
F(y) =  \cA\left( P_{\Snp}(W +\cA^*y)\right) - b 
\]
has a root $y\in \Rm$. 
However, when \eqref{eq:def_of_F} does not have a root, 
then strong duality fails. We elaborate on this pathology further
in~\Cref{sec:Patho}.
One way to guarantee that dual optimal set is nonempty is to
\emph{regularize} \eqref{eq:DNNparam} using \FRp. 
In \Cref{th:dualityFR} below, we list some properties induced by \FR  
properties that guarantee strong duality.

\index{minimal face of $\cF$, $f = \face(\cF,\Snp)$}
\begin{theorem}
\label{th:dualityFR}
Consider the projection problem \cref{eq:DNNparam} with data
$W,\cA,b$. Denote
\textdef{$f := \face(\cF,\Snp)$, the minimal face of $\Snp$ containing~$\cF$}. 
Let $\hat X \in
\relint f$ and let $V$ be a full column rank facial range vector with orthonormal columns with $\range V = \range \hat X$, $\rank(V)=r$.
Let 
\textdef{$\widebar W = V^TWV\in \Sr$}. Define the linear transformation  
\[
\textdef{$\cV (R) := VRV^T$}, \, R\in \Sr.
\]
Let $\bar \cA,\bar b$ define the affine constraints obtained
from  $(\cA \circ \cV)(\cdot),b$ after deleting
redundant constraints. Then the following hold:
\begin{enumerate}[(i)]
\item
\label{item:faceRestProb}
A facially reduced problem of \cref{eq:DNNparam} in the original
space $\Sn$ is
\begin{equation}
\label{eq:DNNparamFRf}
\begin{array}{rcl}
X^*(W) :=   & \argmin & \frac 12 \|X- W\|^2
\vspace{.05in}
\\ & \text{s.t.} &  \cA X = b, \, X \in f \qquad (X\succeq_f 0,\,f\unlhd \Snp).
\end{array}
\end{equation}
The \KKT conditions hold at $X^*(W)$ with optimal dual pair $y^*\in \Rm,
Z^*\in f^+$.

\item 
\label{item:faceRedProb}
A facially reduced problem of \cref{eq:DNNparam} 
in the smaller space $\Sr$ with surjective constraint 
$\bar \cA: \Srp \to \R^{\bar m}$ is
\begin{equation}
\label{eq:DNNparamFR}
\begin{array}{c}
\cV^\dagger (X^*(W))
   =R^*(\widebar W) 
:= \argmin \left\{ \frac 12 \|R-\widebar W\|^2
                 \,:\,  \bar \cA R = \bar b, \, R \in \Srp \right\},
\end{array}
\end{equation}%
\linelabel{line:MPinv} \label{pg:MPinv}%
where we denote $\cV^\dagger$ for the Moore-Penrose generalized
inverse of $\cV$.\footnote{The Moore-Penrose generalized inverse of
a linear transformation $\cV$ is the
unique linear transformation satisfying the four Penrose equations,
e.g.,~\cite{BiGr:74}.}
The \KKT conditions hold at $R^*(\widebar W)$ with optimal dual pair $y^*\in 
\R^{\bar m}, Z^*\in \Srp$.

\index{Moore-Penrose generalized inverse, $\cdot^\dagger$}
\index{$\cdot^\dagger$, Moore-Penrose generalized inverse}

\item
\label{item:strDualityFR}
\underline{Strong duality} holds for the \FR problems
\cref{eq:DNNparamFRf} and \cref{eq:DNNparamFR}. Moreover:
\[
\textdef{$X^* = \cV(R^*(\widebar W))$}
\text{   solves the original problem \cref{eq:DNNparam}};
\]
and, $\widebar R = R^*(\widebar W)$ is a solution to
the \FR primal problem \cref{eq:DNNparamFR} if, and only if,
\[
\widebar R = P_{\Srp}(\widebar W+\bar \cA^* \bar y),
\]
where  $\bar y$ is a root of the function 
\begin{equation}
\label{eq:Fbary}
\textdef{$\widebar F( y): = \bar \cA P_{\Srp}(\widebar W+\bar \cA^*  y) - \bar b$}.
\end{equation}
Equivalently, $X^*(W)$ is a solution to
the original primal problem \cref{eq:DNNparam} if, and only if,
there exists $\hat y$ such that
\begin{equation}
\label{eq:Ffy}
0=\textdef{$F_f( \hat y): = \cA P_f( W+ \cA^*  \hat y) -  b$}, \quad
X^*(W) = P_f(W+\cA^* \hat y),
\end{equation}
where $P_f$ stands for the projection onto $f := \face(\cF,\Snp)$
given by:
\begin{equation}
\label{eq:elegprojformorig}
P_f(u) = V\left(P_{\Srp}(V^T(u)V)\right)V^T  \quad \left(=
\cV \left(P_{\Srp} \cV^*(u)\right), \quad \cV^*\cV = I\right).
\end{equation}
\linelabel{line:defPf} \label{pg:defPf}
\end{enumerate}
\end{theorem}

\begin{proof}
The proof for
\Cref{item:faceRestProb,item:faceRedProb}
follows from the regularization in~\cite{bw3} with the
substitution $X \leftarrow VRV^T$. We note that the number of variables in the objective function reduces since $V$ has orthonormal columns and the norm is orthogonally
invariant.
The details follow from the proof of \Cref{th:duality} using the 
\KKT conditions after \FRp.
Note that the first-order 
optimality conditions for the facially reduced problem are:
\begin{equation}
\label{eq:KKTproboneFR}
\begin{array}{cll}
X-W-\cA^*y- Z= 0, & Z\succeq_{f^+}0, & \text{(dual feasibility),}
\\ \cA X-b = 0, & X\succeq_{f}0, & \text{(primal feasibility),}
\\ \langle Z, X \rangle= 0, &  & \text{(complementary slackness).}
\end{array}
\end{equation}

\index{\KKTp, Karush-Kuhn-Tucker}

\noindent
\Cref{item:strDualityFR}:
We first show the \emph{elegant} projection formula
\cref{eq:elegprojformorig}.
To show that the expression for $P_f(u)$ solves the nearest
point problem defined as $P_f(u) = \arg \min_{v\in f} \frac 12 \|v-u\|^2$,
we now verify the optimality conditions \cite[Theorem
3.1.1.]{MR1865628}:
\[
\trace \left\{(P_f(u)-u)(x-P_f(u))\right\} \geq 0, \, \forall x \in f,
\]
i.e., for each $x\in f$, there is $R\in\Srp$ such that $x = VRV^T$ and thus,
\[
\begin{array}{rcl}
	&& \trace\left\{( V(P_{\Srp}(V^T(u)V))V^T -u) (x- V(P_{\Srp}(V^T(u)V))V^T)
	\right\}
	\\&=&\trace\left\{( V(P_{\Srp}(V^T(u)V))V^T -u) (VRV^T- V(P_{\Srp}(V^T(u)V))V^T)
	\right\}
	\\&=&
	\trace\left\{ 
	V(P_{\Srp}(V^T(u)V))V^T VRV^T
	+uV(P_{\Srp}(V^T(u)V))V^T 
	\right. 
	\\ && \left.\quad -V(P_{\Srp}(V^T(u)V))V^T V(P_{\Srp}(V^T(u)V))V^T -uVRV^T
	\right\}
	\\&=&
	\trace\left\{ 
	(P_{\Srp}(V^T(u)V))(V^T VRV^TV)
	+(V^T uV)(P_{\Srp}(V^T(u)V))
	\right. 
	\\ && \left.\quad -(P_{\Srp}(V^T(u)V))(P_{\Srp}(V^T(u)V)) -uVRV^T
	\right\}
	\\&=&
	\trace\left\{ 
	(P_{\Srp}(V^T(u)V))(R)
	+(V^T uV)(P_{\Srp}(V^T(u)V))
	\right. 
	\\ && \left.\quad -(P_{\Srp}(V^T(u)V))(P_{\Srp}(V^T(u)V)) -V^TuVR
	\right\}
	\\&=&
	\trace\left\{ (P_{\Srp}(V^TuV) - (V^TuV))(R- P_{\Srp}(V^TuV))\right\} 
	\geq 0,
\end{array}
\]
where the last inequality comes from the projection.
This completes the proof of~\cref{eq:elegprojformorig} and establishes the
formula for the projection onto the face.

We continue to study the case where the \CQ (strict feasibility) fails.
With $P$ being the projection that makes the linear transformation onto,
we get \cref{eq:Fbary} is equivalent to: 
\[
\begin{array}{rcl}
\widebar F( y) 
&=&
 \bar \cA P_{\Srp}(\widebar W+\bar \cA^*  y) - \bar b
\\&=&
 (P \cA\circ \cV) P_{\Srp}( V^T WV +(P \cA\circ \cV) ^*  y) - P b,
\text{  for given data $W$}
\\&=&
 (P \cA\circ \cV) P_{\Srp}( V^T WV +(\cV^*\circ \cA^* P^T)(y) ) - P b
\\&=&
 (P \cA)\left(V\left[ P_{\Srp}( V^T (W +\cA^*P^Ty)V )\right]V^T\right) - P b
\\&=&
 (P \cA)\left( P_f(W +\cA^*P^Ty)\right) - P b
\\&=&
 (P \cA)\left( P_f(W +(P\cA)^*y)\right) - P b,
\end{array}
\]
where we have used the elegant formula~\cref{eq:elegprojformorig}.

This shows that we can work in the original space if we have done facial
reduction. Moreover,
\[
  P_f(W +\cA^*P^Ty) = 
 \cV\left[ P_{\Srp}( \cV^* (W +\cA^*P^Ty) )\right].
\]
Recall that $V^TV=I$.
In summary, necessity of~\cref{eq:Fbary} is clear. Therefore necessity
of~\cref{eq:Ffy} follows from
\[
\begin{array}{rcl}
 0 
&=&
\bar \cA P_{\Srp}(\widebar W+\bar \cA^*  y) - \bar b
\\&=&
(P\cA) \cV P_{\Srp}(\cV^*(W+ \cA^*P^T  y)) - P b
\\&=&
 (P \cA) P_f(W +\cA^*P^Ty) - P b.
\end{array}
\]
We can remove $P$ in the last line and ignore the redundant constraints.
\samepage
\end{proof}

We emphasize that strong duality holds in \Cref{th:duality}, 
\Cref{it:thduality-3}, only when there is a dual optimal solution that 
attains the dual optimal value $d^*$. However, 
strong duality is \emph{guaranteed} to hold 
in  \Cref{th:dualityFR}, \Cref{item:strDualityFR}, 
as a result of the regularization. 

\begin{remark}
\label{rem:projformula}
The proof of~\Cref{th:dualityFR} above provides the following
elegant formula for the projection of $u\in \Sn$
 onto the face $f=V\Srp V^T, V^TV=I$,
\begin{equation}
\label{eq:projfaceVSrVt}
\fbox{
$P_f(u) = V\left(P_{\Srp}(V^T(u)V)\right)V^T =
\cV \left(P_{\Srp} \cV^*(u)\right), \quad \cV^*\cV = I$,
}
\end{equation}
i.e.,~the work of finding the projection onto the face $f$ is reduced to the
well-known projection onto the smaller dimensional proper cone $\Srp$.
\end{remark}

We now consider dual optimal sets
\begin{equation}
\label{eq:solutionSets}
\cS := \{ y\in \Rm : F(y) = 0\} \text{ and } 
\cS_f := \{y\in \Rm : F_f(y)=0\}, 
\end{equation}
where $F$ is defined in \eqref{eq:def_of_F} and $F_f$ is defined in \cref{eq:Ffy}. 
In fact, when strict feasibility fails, $\cS$ is unbounded given that $\cS \ne \emptyset$ (see \Cref{thm:auxvectorNulFp} \ref{item:44-1} below).
Moreover, $\cS$ and $\cS_f$ are convex  and  $\cS \subseteq  \cS_f$.

\begin{theorem}
\label{thm:maxsdSolutions}
The facially reduced problem \cref{eq:Ffy} admits at least
$\maxsd(\cF)$ number of affinely independent dual solutions $y$. 
\end{theorem}
\begin{proof}
Let $(\bar{X}, \bar{y}, \bar{Z})$ be a triple satisfying \eqref{eq:KKTprobone}.
Let  $\lambda^1, \lambda^2, \ldots, \lambda^{\maxsd(\cF)}$ be vectors generated by \FR iterations. 
\linelabel{line:addingLamb} \label{pg:addingLamb}
By \eqref{eq:AuxSys}, each $\lambda^i$ satisfies $\cA^* (\lambda^i) \succeq 0$ and $\<\bar{X}, \cA^*(\lambda^i) \>=0$.  Since $\bar{y}$ satisfies $\bar X-W-\cA^*\bar y- \bar Z= 0$, it follows that $\bar X-W-\cA^*(\bar y - \lambda^i)- ( \bar Z + \cA^*(\lambda^i)) =0$. Moreover, 
$\bar Z + \cA^*(\lambda^i)\succeq 0$ and  $\<\bar{X}, \bar Z + \cA^*(\lambda^i) \>=0$.
Thus the vectors in the following set 
\[
\textdef{$\cS_\lambda$} := \bar{y} - \{\lambda^1, \ldots , \lambda^{\maxsd(\cF)}\}
=
\{
\bar{y} - \lambda^1, \ldots, \bar{y} - \lambda^{\maxsd(\cF)}
\}
\]
are solutions to \cref{eq:Ffy} as well.  Consequently, the result 
follows from the linear independence in~\Cref{prop:FRsequence}.
\samepage
\end{proof}

We show in \Cref{example:DiffSolSet} that $\cS$ and $\cS_f$ can differ, i.e., the containment $\cS \subseteq  \cS_f$ can be strict.

\begin{example}[$\mathcal{S} \subsetneq {\mathcal{S}_f}$]
\label{example:DiffSolSet}
Consider the instance $\cF$ given by the data:
\[
A_1 = \begin{bmatrix} 1&0&0\\ 0&0&0 \\ 0&0&0 \end{bmatrix}, \,
A_2 = \begin{bmatrix} 0&0&1\\ 0&1&0 \\ 1&0&0 \end{bmatrix}, \,
A_3 = \begin{bmatrix} 0&0&0\\ 0&0&0 \\ 0&0&1 \end{bmatrix},\,  \text{ and }
b = \begin{pmatrix} 1 \\ 0 \\ 0 \end{pmatrix}.
\]
The singularity degree of $\cF$ is $2$, $\sd(\cF) =2$.
The first \FR iteration yields a face that strictly contains the minimal
face and corresponds to
$\lambda^1 = \begin{pmatrix} 0 \\ 0 \\ 1 \end{pmatrix}$ with the facial range vector 
$V_1 = \begin{bmatrix} 1&0 \\ 0&1\\ 0&0 \end{bmatrix} $;
and the second \FR iteration yields 
$\lambda^2 = \begin{pmatrix} 0 \\ 1\\ 0 \end{pmatrix}$ with the facial range vector 
$V_2 = \begin{pmatrix} 1 \\ 0\end{pmatrix}$. 
Thus, the minimal facial range vector $V$ for $\cF$ is $V = V_1V_2 = \begin{bmatrix} 1 \\0 \\ 0 \end{bmatrix} $.
The facially reduced system is 
$\{R \in \Sc_+^1: 
\begin{bmatrix} 1 \end{bmatrix} R = 1 \}$. 
\linelabel{line:singletonF} \label{pg:singletonF}
We note that $\cF$ contains a unique point~$e_1e_1^T$.

We now consider the \BAP \cref{eq:DNNparam} with 
$W = \begin{bmatrix}0&0&0\\0&-1&-1\\0&-1&0\end{bmatrix} $. 
We consider the triple $(\bar{X},\bar{Z}, \bar{y})$ where
\[
\bar{X} = \begin{bmatrix} 1&0&0\\ 0&0&0 \\ 0&0&0 \end{bmatrix},
\bar{Z} = \begin{bmatrix} 0&0&0\\ 0&1&1 \\ 0&1&2 \end{bmatrix}, \text{ and }
\bar{y} = \begin{pmatrix} 1 \cr 0 \cr -2 \end{pmatrix} .
\]
The triple $(\bar{X},\bar{Z},\bar{y})$ satisfies the first-order optimality conditions.

We note that 
$\bar{y} - \lambda^1$ and $\bar{y} - \lambda^2$ are solutions to \cref{eq:Ffy}.
However,  $\bar{y} - \lambda^2$ is not a solution to \cref{eq:def_of_F} since 
\[
W+\cA^*(\bar{y} - \lambda^2) 
=
\begin{bmatrix}0&0&0\\0&-1&-1\\0&-1&0\end{bmatrix}  
+ \begin{bmatrix}1&0&1\\0&1&0\\1&0&-2\end{bmatrix} 
= 
\begin{bmatrix} 1&0&-1\\ 0&-2&-1 \\ -1&-1&-2 \end{bmatrix},
\]
and 
\linelabel{line:fix_sign}
$\bar{X} - W-\cA^*(\bar{y} - \lambda^2)$ has a negative eigenvalue. 
\end{example}

It is of interest that the containment relation
$\mathcal{S} \subsetneq {\mathcal{S}_f}$ in \Cref{example:DiffSolSet}
stems from the solutions to \eqref{eq:AuxSys}.
This can also be seen from the optimality characterization
\eqref{eq:KKTprobone} with $Z\succeq 0$ and the regularized
characterization~\eqref{eq:KKTproboneFR}
with $Z\succeq_{f^+} 0$, where the failure of strict feasibility 
implies $f\subsetneq \Snp, f^+\supsetneq \Snp$.

\section{A Basic Newton Method}
\label{sec:basicNewton}

\label{sect:optcond}

We design a Newton-like method that solves for a root
$\bar y$, $F(\bar y)=0$, where
\[
F(y) = \cA P_{\Snp}(W+\cA^*y)-b. 
\]
Rather than applying an optimization
algorithm to solve the dual as in~\cite{MR2112861}, we highlight that
we solve a system of equations of the form $F(y)=0$ to find points satisfying the first-order optimality
conditions as is done in \cite{BoWo:86,MiSmSwWa:85,CensorMoursiWeamsWolk:22}.
Given that $F(\bar{y})=0$, it follows that the optimum of the $\BAP$ \eqref{eq:DNNparam} is $\bar X =  P_{\Snp}(W+\cA^* \bar y)$.
We note that $P_{\Snp}$ is found using the Eckart-Young Theorem
\cite{EckartYoung:36},
i.e.,~we use a spectral decomposition and set the negative eigenvalues
to $0$.
Primal feasibility is immediate from the definitions and the projection.
An application of the Moreau theorem yields the dual feasibility and
complementarity.

We now  present the pseudo-code of our Semi-Smooth Newton Method for the
\BAP \Cref{eq:DNNparam} in \Cref{alg:1}.
The algorithm proceeds as follows: at each iteration, the search direction is computed by forming a (Clarke generalized) Jacobian $J_k$ of $F$ at the current iterate $y_k$, and the triple $(X_k,Z_k,y_k)$ is then updated using this search direction.  We resort to evaluations of unit vectors to construct $J_k$, the  computational steps are discussed in~\Cref{sect:ADDF}

\begin{algorithm}[h]
	\caption{Semi-Smooth Newton Method for \BAP for Spectrahedra}
	\label{alg:1}
	\begin{algorithmic}[1]
		\Require{
$\left(W\in\Sn, \cA:\Sn\to \Rm, b\in \Rm\right)$, 
$\left(y_0\in\Rm, \varepsilon>0, \text{ maxiter} \in \mathbb{N}\right)$}
		\State{\textbf{Initialization:} $k\leftarrow 0$, $F_0 \leftarrow \cA(X_0)-b$, 
		      stopcrit $\leftarrow  \|F_0\|/(1+\|b\|)$}, \\ \hspace{2.7cm} $X_0
		\leftarrow \PSnp(W+\cA^* y_0)$, $Z_0 \leftarrow
		(X_0-W-\cA^*y_0)$
		\While{(stopcrit  $>\varepsilon$) \& ($k\leq$ maxiter)}
		\State{construct Jacobian $J_k$ by evaluating at unit vectors
		     $J_k(e_i)$} 
		\label{line:algoJkReg}
		\State{choose a regularization parameter $\sigma \geq 0$
		for positive definite $\bar{J} = (J_k+\sigma I_m)$}
		\State{solve system $\bar{J} d = -F_k$} \Comment{(Obtain
		Newton direction $d$)} 
		\State{\bf update:}
		\State{$\qquad y_{k+1}\leftarrow y_k+d$}
		\State{$\qquad X_{k+1}\leftarrow \PSnp(W+\cA^*y_{k+1})$}
		\State{$\qquad Z_{k+1}\leftarrow  X_{k+1}-(W+\cA^* y_{k+1})$ }
		\State{$\qquad F_{k+1} \leftarrow \cA (X_{k+1}) -b$}
		\State{$\qquad \text{stopcrit } \leftarrow 
		           \|F_{k+1}\|/(1+\|b\|)$}
		\State{$\qquad k\leftarrow k+1$}
		\EndWhile
		\State{\textbf{Output:} Primal-dual near optimum: $X_k, (y_k,Z_k)$}
	\end{algorithmic}
\end{algorithm}

\begin{remark}[On the convergence of \Cref{alg:1}]\label{remark:convAlg}
Convergence guarantees for \Cref{alg:1} are derived from  the so-called \emph{inexact Semismooth Newton's Algorithm} in  Facchinei and Pang's book; see \cite[Algorithm~7.5.4]{facchinei2003finiteVol2}. 
Specifically, Facchinei and Pang consider an inexact Newton's method for obtaining a zero of a semismooth operator $F:\mathbb{R}^{m}\to\mathbb{R}^m$, with search direction $d^k$  defined by
\[
F(x^k) + J_k d^k = r^k, \quad \forall k,
\]
where $J_k$ is an element of the Clarke generalized Jacobian of $F$ at $x^k$, and the error $r^k$ satisfies
\[
\|r^k\| \leq	 \eta_k  \|F(x^k)\|
\]
for some nonnegative sequence $\{\eta_k\}$.
According to \cite[Theorem~7.5.5]{facchinei2003finiteVol2}, if the generalized Jacobian is nonsingular at a point $x^*$ satisfying $F(x^*)=0$, and if there exists a constant $\bar{\eta}>0$ such that $\eta_k \leq \bar{\eta}$ for all $k$, then the method converges locally to $x^*$.

Our algorithm can be regarded as a realization of this inexact Semismooth Algorithm  where  $r^k = -\sigma d^k$ , with $\sigma>0$, and therefore $d^k = - (J_k + \sigma I_m)^{-1} F(x^k)$. Hence,  we may choose $\eta_k$ as
\[
\eta_k =  \frac{\|r^k\|}{\|F(x^k)\|} =  \frac{ \|( J_k + \sigma I_m)^{-1} F(x^k)  \|}{\|F(x^k)\|}\sigma   \leq  \frac{\sigma} {\lambda_{\min} (J_k)  + \sigma} \leq 1, \quad \forall k,
\]
where $\lambda_{min}(J_k)$ stands for the smallest eigenvalue of $J_k$.
Therefore,  \cite[Theorem~7.5.5]{facchinei2003finiteVol2} provides local convergence guarantees for \Cref{alg:1}.
\end{remark}

\subsection{Alternative Directional Derivative Formulation}
\label{sect:ADDF}

We now outline the computation of the Jacobian $J_k$
at line \ref{line:algoJkReg} in \Cref{alg:1}. 
In principle, obtaining a Clarke generalized Jacobian of $F$ in our semismooth Newton method would
require computing an element in the Clarke generalized Jacobian of  $P_{\Snp}$.
Every element in the generalized Jacobian 
is a $4$-\emph{tensor} on $\Rn$, whose complete
formulation can be found in~\cite{MR2252652}.  In matrix form this would be
expressed as a square matrix of order $n^4$. The memory requirements for
storing a matrix of such dimension can be too demanding even for
moderate values of $n$. In particular, \textsc{Matlab} software would
have problems with size $n\geq150$.

In order to overcome the memory deficiency,  
we make use of an elegant formula for evaluating Clarke generalized Jacobians
of $P_{\Snp}$ derived by Sun in~\cite{Su:06}, which builds upon earlier developments in~\cite{PangSunSun:03,sun2002semismooth}. In what follows we show this formula can be leveraged to efficiently compute a Clarke generalized Jacobian of $F$ through evaluations involving only the unit vectors $e_i\in \Rm$. We begin by introducing the following notation.

\linelabel{line:updatedJac_begin}
Let $S = U\Lambda U^T\in \Sn$ be the spectral decomposition with vector of eigenvalues $\lambda$ given as above. 
We partition the index sets of the eigenvalues based on their signs:
\[
\alpha = \{i:\lambda_i>0\},\,
\beta = \{i:\lambda_i=0\},\,
\gamma = \{i:\lambda_i<0\}. \]
Accordingly, the eigenvalue and eigenvector matrices can be block-partitioned  as
\[
\Lambda = \blkdiag(\Lambda_\alpha, 0 ,
\Lambda_\gamma), \ \text{ and }\ U=[U_\alpha\,U_\beta\,U_\gamma].
\]
Next, we define $\Omega \in \Sn$ entry-wise by
\begin{equation}
\label{eq:defOmega}
\Omega_{ij} =
\frac
{\max(\lambda_i,0)+\max(\lambda_j,0)}{|\lambda_i|+|\lambda_j|},\,\forall
i,j,
\end{equation}
where the convention $\frac 00:=1$ is adopted. 
By \cite[Proposition~2.2]{Su:06}, for any matrix $S=U\Lambda U^T$, a linear map $G_1$ belongs to the Clarke generalized Jacobian $\partial_C  P_{\Snp}(S)$  if and only if there exists $G_2 \in \partial_C P_{\mathbb{S}_+^{|\beta|}}(0)$ such that for all $H \in \Sn$,
\begin{equation}
\label{eq:G1atH}
G_1(H)=
U\begin{bmatrix}
\tilde H_{\alpha \alpha}  & \tilde H_{\alpha \beta}  &
             \Omega_{\alpha\gamma}\circ \tilde H_{\alpha \gamma}  \cr
\tilde H_{\alpha \beta}^T  & G_2( \tilde H_{\beta
                          \beta})  & 0 \cr
\tilde H_{\alpha \gamma}^T\circ \Omega_{\alpha \gamma}^T  &  0 & 0
\end{bmatrix}U^T,   
\end{equation}
where $\tilde H = U^THU$. 
Equation \eqref{eq:G1atH} indicates that any element of the Clarke generalized Jacobian $\partial_C  P_{\Snp}(S)$  is determined by a Clarke generalized Jacobian  in  $G_2 \in \partial_C P_{\mathbb{S}_+^{|\beta|}}(0)$. Consequently, the problem reduces to finding an element in  $\partial_C P_{\mathbb{S}_+^{|\beta|}}(0)$.

In particular, by~\cite[Example~3.8]{MR2252652}, the Clarke subdifferential of $ P_{\mathbb{S}_+^{|\beta|}}$ at $0$ is characterized by the set of $4$-tensors  on $\mathbb{R}^{|\beta|}$ given by
\[
\partial_C P_{\mathbb{S}_+^{|\beta|}}(0) = \mathrm{conv}  \left( \mathcal{O}^{|\beta|} \cdot (\Diag^{(12)} \mathcal{D}_{\{01\}} (|\beta|) ) \right),
\]
where  $\mathrm{conv}(S)$ stands for the convex hull of a set $S$ and
\begin{enumerate}[(i)]
\item $\mathcal{D}_{\{01\}}(|\beta|)$ is the set of $|\beta|\times|\beta|$ symmetric matrices with entries in $\{0,1\}$ such that the entries in each row (from left to right) or column (from top to bottom) form a noincreasing sequence;
\item for any matrix $M$ in $\R^{|\beta| \times |\beta|}$,  $\Diag^{(12)} M$ is defined as the 4-tensor on $\R^{|\beta|}$ whose components are given by 
\[
(\Diag^{(12)} M)_{\scriptsize\begin{array}{c} i_1 i_2 \\ j_1 j_2\end{array}} =  
\left\{
\begin{array}{ll}
M_{i_1i_2},  &  \text{if } i_1 = j_2 \text{ and } i_2 = j_1, \\
0, & \text{otherwise}.
\end{array}
\right.
\]
\end{enumerate}
We note that, since the zero matrix of size $|\beta|\times|\beta|$ belongs to  $D_{\{01\}}(|\beta|)$, the zero tensor from $\mathbb{R}^{|\beta| \times |\beta|}$ to $\mathbb{R}^{|\beta| \times |\beta|}$ belongs to $\partial_C P_{\mathbb{S}_+^{|\beta|}}(0)$.

Hence,  in \Cref{alg:1}, we can always choose the generalized Jacobian $G_1\in \partial_C  P_{\Snp}(S)$  associated with $G_2=0$.  Under this choice, the evaluation of $G_1$ at any $H\in\Sn$ simplifies to
\begin{equation}\label{eq:G1atH2}
G_1(H)=
U\begin{bmatrix}
\tilde H_{\alpha \alpha}  & \tilde H_{\alpha \beta}  &
             \Omega_{\alpha\gamma}\circ \tilde H_{\alpha \gamma}  \cr
\tilde H_{\alpha \beta}^T  & 0 & 0 \cr
\tilde H_{\alpha \gamma}^T\circ \Omega_{\alpha \gamma}^T  &  0 & 0
\end{bmatrix}U^T,   \quad \forall H \in \Sn,
\end{equation}
where $\tilde H = U^THU$. 
Finally, observe that a Clarke Jacobian for our function $F$ is a matrix that can be explicitly constructed by evaluating this operator along the standard basis vectors.
\linelabel{line:updatedJac_end}

The following lemma provides an explicit expression for evaluating this particular Jacobian $J \in \partial_C F(y)$ for any $y\in\mathbb{R}^n$ along any direction.

\begin{lemma}
\label{lemma:JacobianRepre}
Let $y \in\mathbb{R}^m$ be such that $Y:= W +\cA^*y \in
\Sn$.
Let $Y := U\Diag(\lambda(Y)) U^T$ be a spectral decomposition of $Y$
such that the eigenvalues $\lambda_1\geq \lambda_2 \geq \ldots \geq
\lambda_n$ are sorted in nonincreasing order, and denote with $\alpha$, $\beta$
and $\gamma$ the sets of indices associated with positive, zero and negative
eigenvalues, respectively, i.e.,~$\alpha:=\{ i: \lambda_i >0\}$, $\beta:=\{i:\lambda_i=0\}$ and $\gamma=\{i: \lambda_i <0\}$. 
Then there exists a Clarke generalized Jacobian $J$ in $\partial_C F$ at $y$ such that its evaluation at any direction $\Delta y\in\mathbb{R}^m$ is given by
\begin{equation}
\label{eq:JFeval}
J(\Delta y) = \cA \left(U
 \begin{bmatrix}
\tilde H_{\alpha \alpha}    & \tilde H_{\alpha\beta} & 
             \Omega_{\alpha\gamma}\circ \tilde H_{\alpha \gamma}  \cr 
             \tilde H^T_{\alpha \beta} & 0 & 0 \cr
\tilde H_{\alpha \gamma}^T\circ \Omega_{\alpha \gamma}^T   & 0 & 0 \cr
\end{bmatrix}U^T\right) ,
\end{equation}
where $\tilde{H}:= U^T\left( \cA^*\Delta y\right)U$.
\end{lemma}

\begin{proof}
It suffices to recall that the chain rule for Clarke subdifferential applied to $F$ yields
\[
\partial_C F(y) = \mathcal{A} \partial_C  P_{\Snp} (W + \mathcal{A}^*y ) \mathcal{A}^* ,
\]
where the equality above is by \cite[Theorem~2.3.10]{Clarke1990optimization}, since semismooth implies Clarke regular.
Hence, by taking $G_1\in \partial_C  P_{\Snp}(S) $ given  above for $S=W+\mathcal{A}^* y$, the equation in \Cref{eq:G1atH2} provides~\Cref{eq:JFeval}.
\end{proof}

We now outline the steps for computing the Clarke generalized Jacobian $J_k$
at line \ref{line:algoJkReg} in \Cref{alg:1}. Specifically, by \Cref{lemma:JacobianRepre}, the Jacobian of $F$ evaluated at $y\in\mathbb{R}^m$, $J(y)$, is computed following the steps below. 
\begin{enumerate}[(i)]
\item 
Let  $Y = W + \cA^* y \in \Sn$. Consider the spectral decomposition of $Y$ given by
\[
Y =
\begin{bmatrix} V_\alpha & V_\beta & V_\gamma \end{bmatrix} \Diag(\lambda (Y))
\begin{bmatrix} V_\alpha & V_\beta & V_\gamma \end{bmatrix}^T,
\]
where $V_\alpha, V_\beta$ and $V_\gamma$ are the matrices of  eigenvectors  associated with the positive, zero and negative eigenvalues of $Y$, respectively.

\item  
Define the rotation 
$\textdef{$\mathcal{R}_{Y}$} : \Sn \to \Sn$, by
\begin{equation}
\label{eq:Rrotat}
\mathcal{R}_{Y}(\rho)  :=
\begin{bmatrix} V_\alpha & V_\beta & V_\gamma \end{bmatrix} \rho \begin{bmatrix} V_\alpha & V_\beta & V_\gamma \end{bmatrix}^T ;
\end{equation}
\item For each $j=1,\ldots,m$, compute
\begin{equation}
\label{def:Ti}
T_j := \begin{bmatrix}
V_\alpha^TA_j V_\alpha &  V_\alpha ^T A_j V_\beta  & \Omega_{\alpha\gamma} \circ V_\alpha ^TA_j V_\gamma \\
(V_\alpha^T A_j V_\beta)^T & 0 & 0 \\
\bigl( \Omega_{\alpha\gamma} \circ  V_\alpha ^T A_j V_\gamma^T \bigr)^T & 0 & 0
\end{bmatrix} \in \Sn ;
\end{equation}
\item 
The $j$-th column of the Jacobian at $y$, $J(y)$, is
\begin{equation}
\label{eq:eachJacCol}
\cA(  \mathcal{R}_{Y}(T_j))
=:
A \svec \left(  \mathcal{R}_{Y}(T_j) \right).
\end{equation}
\end{enumerate}

\section{Failure of Regularity and Degeneracy}
\label{sec:FailureReg}

This section examines various aspects of \Cref{alg:1} that result from 
the failure of strict feasibility.
The failure of regularity is known to result in pathologies in conic
programs, both on theoretical and practical aspects. 
We show that \Cref{alg:1} is not an exception to this phenomenon. 

This section is organized in three parts. 
In \Cref{sec:Patho} we discuss two types of pathologies.
One well-known pathology is the possibility of failure of strong
duality. Since the primal and dual optimal values agree
(\Cref{th:duality} \ref{it:thduality-2}), the only difficulty left is that
the dual optimal value may not be attained by any dual feasible point.
We identify a condition where this occurs (see \Cref{lem:closure})
and show how to construct instances where strong duality fails.
Another well-known consequence of the absence of strict feasibility is that
the dual optimal set is unbounded \cite{MR0489903}. We explain why
\Cref{alg:1} experiences difficulties in this case in \Cref{sec:unboudedDualSet}.
The second part in
\Cref{sec:JacoDegen} is devoted to understanding the properties of the Jacobian of $F$
computed near the optimal point as seen through the lens of degeneracy.
We recall the discussions from \Cref{sec:degen} to help explain the
behaviour of \Cref{alg:1}.
In particular, we rely on the fact that \emph{every point in $\cF$ 
is degenerate in the absence of strict feasibility}. 
We conclude in \Cref{sec:degenofMCQAP}
with the application of degeneracy identification 
to two real-world examples: the elliptope and the vontope.

\index{\QAPp, quadratic assignment problem}
\index{quadratic assignment problem, \QAPp}

\subsection{Pathologies Arise in the Absence of Strict Feasibility}
\label{sec:Patho}

In this section we discuss pathologies that arise as a result 
of the absence of strict feasibility.
We provide a method of constructing instances 
where the dual optimal value is not attained.
In addition, assuming that the dual
optimal value is attained,  we provide members that certify the 
unbounded dual optimal set; and we examine
the behaviour of \Cref{alg:1}.

\subsubsection{Unattained Dual Optimal Value}
\label{sect:unattaineddual}
\index{$(x,y)$, open interval}
\index{open interval, $(x,y)$}
\index{$p^* = d^*$, zero duality gap}
\index{$\null A$, null space of $A$}

\Cref{th:duality} states that there
is always a \textdef{zero duality gap, $p^* = d^*$} and 
 the solution value of the primal problem, $p^*$, is attained.
However, in the absence of strict feasibility,
the dual attainment does not necessarily hold. 
\Cref{examp:KKTfailsS2} below illustrates that  
strong duality can fail for~\cref{eq:DNNparam} when strict feasibility
fails.

\begin{example}[Failure of strong duality]
\label{examp:KKTfailsS2}
Consider the following instance of \BAP~\cref{eq:DNNparam} given by
\begin{equation}
\label{eq:sdfailEx}
\min_X \left\{ \frac{1}{2}\Bigg\|X - \begin{bmatrix} 0 & -1 \cr -1 & 0
\end{bmatrix}\Bigg\|^2 \ : \ X_{11} = 0, \,\; X\succeq_{\Sc^2_+} 0 \right\}.
\end{equation}
The set of feasible solutions of \eqref{eq:sdfailEx} is $\{X \in \Sc^2 :  X_{11}=X_{12}=X_{21} = 0, X_{22} \geq 0 \}$.
Therefore, the optimal value of the problem is 
\begin{equation*}
1 = \min_{X_{22} \geq 0 } \frac{1}{2} \Bigg\| \begin{bmatrix} 0 & 1 \cr 1 & X_{22} \end{bmatrix} \Bigg\|^2 = \frac{1}{2} \left( 2 + X_{22}^2 \right),
\end{equation*}
which is attained when $X_{22} = 0$. In other words, the optimal solution of the best approximation problem  is attained at $ \bar X=0$. 

Now, note that the primal constraint in \eqref{eq:sdfailEx} is given by $\trace\left( E_{11}X\right) = \cA X = 0$, and therefore $\cA^* y = y E_{11}$ for all $y\in\mathbb{R}$. 
Thus, dual feasibility of the optimality conditions (see~\eqref{eq:KKTprobone})  implies 
\begin{equation*}
- \begin{bmatrix} 0 & -1 \cr -1 & 0 \end{bmatrix} - \bar{y} E_{11} =
\begin{bmatrix} - \bar{y} & 1 \cr 1 & 0 \end{bmatrix}\in \Sc^2_+, 
\text{ for some } \bar{y} \in \mathbb{R}.
\end{equation*} 
However this does not hold for any $\bar{y}\in\mathbb{R}$.  Thus attainment fails for the dual.
\end{example}

\Cref{examp:KKTfailsS2} above illustrates that strong duality may fail in the absence of strict feasibility; the linear manifold defined by $X_{11}=0$ entirely consists of singular matrices. 
We note that strong duality can hold even in the absence of strict feasibility. 
\Cref{rem:KKTfails} presents a constructive approach for generating instances that
fail strong duality. We first recall the following.
\begin{lemma}[\!\!{\cite[Lemma 2.2]{RaTuWo:95}}]
\label{lem:closure}
Suppose that $0\neq K\unlhd \Snp$, is a proper face of $\Snp$. Then
\[
	\Snp + K^\perp = \overline{\Snp + \spanl K^{\Delta}}.
\]
Furthermore, 
\begin{equation}
\label{eq:notclosed}
\Snp + \spanl K \text{  is not closed}.
\end{equation}
\end{lemma}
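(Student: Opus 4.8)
The plan is to prove the two assertions separately, recalling that $K \unlhd \Snp$ proper means $\{0\} \subsetneq K \subsetneq \Snp$, that $K^\Delta = K^\perp \cap \Snp$ (self-duality of $\Snp$), and that, by the facial structure of $\Snp$, after an orthogonal change of coordinates we may write $K = \left\{ \bmat{R & 0 \\ 0 & 0} : R \in \Srp \right\}$ for some $1 \le r \le n-1$, with $K^\perp = \left\{ \bmat{0 & B \\ B^T & C} : B \in \R^{r \times (n-r)},\, C \in \Sc^{n-r} \right\}$ and $K^\Delta = \left\{ \bmat{0 & 0 \\ 0 & C} : C \in \Snrp \right\}$, so $\spanl K^\Delta = \left\{ \bmat{0 & 0 \\ 0 & C} : C \in \Snr \right\}$.

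For the first identity $\Snp + K^\perp = \Snp + \spanl K^\Delta$: the inclusion $\supseteq$ is immediate since $\spanl K^\Delta \subseteq K^\perp$. For $\subseteq$, I would take $S \in \Snp$ and $N \in K^\perp$ and, using the block form above, absorb the off-diagonal block $B$ of $N$ into the $\Snp$ summand. Concretely, write $S = \bmat{S_{11} & S_{12} \\ S_{12}^T & S_{22}}$ with $S \succeq 0$; the goal is to choose $S' \succeq 0$ and $C' \in \Snr$ so that $S + N = S' + \bmat{0 & 0 \\ 0 & C'}$, i.e. $S'$ has the same $(1,1)$ and $(1,2)$ blocks as $S+N$. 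Since $S_{11} \succeq 0$ is the $(1,1)$ block and we are free to enlarge the $(2,2)$ block arbitrarily, a Schur-complement argument lets us pick the $(2,2)$ block of $S'$ large enough (e.g. $S_{22} + tI$ for $t$ large, plus a correction for the changed off-diagonal block $S_{12}+B$) to keep $S' \succeq 0$; then $C'$ is defined as whatever residual is needed in the $(2,2)$ block. The main technical point is verifying positive semidefiniteness of $S'$ via the Schur complement condition $S' \succeq 0 \iff S'_{22} \succeq 0$ and $\range(S'_{12}) \subseteq \range(S'_{22})$ and $S'_{11} - S'_{12}(S'_{22})^\dagger (S'_{12})^T \succeq 0$; choosing $S'_{22} \succ 0$ makes the range condition vacuous and the last term a small perturbation of $S_{11}$ that can be handled by taking $S'_{22}$ large.

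For the non-closedness claim \eqref{eq:notclosed}, the plan is to exhibit an explicit sequence. In block form, take $X_k = \bmat{1/k & 1 \\ 1 & k} \in \Snp$ (taking $r=1$, $n=2$; in general pad with zeros) minus $k \cdot \bmat{0 & 0 \\ 0 & 1} \in \spanl K$, giving elements $\bmat{1/k & 1 \\ 1 & 0} \in \Snp + \spanl K$ that converge to $\bmat{0 & 1 \\ 1 & 0}$, which is not in $\Snp + \spanl K$: any element of that set has $(1,1)$-block of the form $S_{11} \succeq 0$ with the off-diagonal/diagonal structure forcing, whenever the $(1,1)$ entry is $0$, that the first row and column vanish, contradicting the off-diagonal $1$. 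I would phrase this limit obstruction cleanly using the characterization that $Y \in \Snp + \spanl K$ iff $Y$ restricted appropriately is PSD, or simply by the direct block computation. I expect the \textbf{main obstacle} to be the positive-semidefiniteness bookkeeping in the first identity — making the Schur-complement perturbation argument precise without drowning in block-matrix algebra — whereas the non-closedness part is routine once the right sequence is written down.
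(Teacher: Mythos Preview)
The paper does not prove this lemma; it is quoted from the cited reference, so there is no in-paper proof to compare against. Your plan for the non-closedness part~\eqref{eq:notclosed} is sound, modulo a swapped-block slip: with $r=1$, the matrix $\bmat{0&0\\0&1}$ lies in $\spanl K^\Delta$, not $\spanl K$. Take instead $X_k=\bmat{k&1\\1&1/k}\in\Snp$ minus $k\bmat{1&0\\0&0}\in\spanl K$, converging to $\bmat{0&1\\1&0}$, and argue (with the blocks reversed from what you wrote) that any element of $\Snp+\spanl K$ with zero $(2,2)$ entry must have zero off-diagonal.

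Your plan for the displayed identity, however, has a gap that cannot be filled, because the identity as printed is not correct. The Schur-complement step requires some $S'_{22}$ making $\bmat{S_{11}&S_{12}+B\\(S_{12}+B)^T&S'_{22}}\succeq 0$, but when $S_{11}$ is singular and $\range(S_{12}+B)\not\subseteq\range(S_{11})$ no such $S'_{22}$ exists, however large. Concretely, $\bmat{0&1\\1&0}=0+\bmat{0&1\\1&0}\in\Snp+K^\perp$ but is not in $\Snp+\spanl K^\Delta$, since $\bmat{0&1\\1&d}$ is never positive semidefinite. In block form $\Snp+K^\perp=\{Y:Y_{11}\succeq 0\}$ while $\Snp+\spanl K^\Delta=\{Y:Y_{11}\succeq 0,\ \range(Y_{12})\subseteq\range(Y_{11})\}$, strictly smaller; the correct statement carries a closure, $\Snp+K^\perp=\cl(\Snp+\spanl K^\Delta)$, and with that closure your perturbation idea goes through after first replacing $S_{11}$ by $S_{11}+\varepsilon I$. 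Since only~\eqref{eq:notclosed} is invoked later in the paper, this does not affect anything downstream.
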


\begin{remark}[Constructing instances that fail strong duality]
\label{rem:KKTfails}
The dual feasibility of the first-order optimality conditions \cref{eq:KKTprobone} states:
\[
\bar X-W \in \range (\cA^*) + \Snp.
\]
From \cref{eq:notclosed}, we can choose any proper
face $K\unlhd \Snp$ and construct a linear map $\cA$ to satisfy
$\range(\cA^*) = \spanl K$.
Therefore, 
\[
\bar X - W \in
\overline{\range (\cA^*) + \Snp} \backslash
            \left(\range (\cA^*) + \Snp\right), \, \bar X\succeq 0,
\]
results in the failure  of \cref{eq:KKTprobone}. \Cref{examp:KKTfailsS2} indeed falls into this category.
Note that we can
always choose $b=\cA \bar X$ so that we still have a zero duality gap.
\end{remark}

\subsubsection{Unbounded Dual Optimal Set and Singular Jacobian}
\label{sec:unboudedDualSet}

We now discuss a property of the dual optimal set that, if it exists,
results in a poor behaviour of \Cref{alg:1}.
Recall that the absence of strict feasibility of $\cF$ implies the existence of a solution $\lambda$ of the auxiliary system~\Cref{eq:AuxSys}. 
We use the solution $\lambda$ of \cref{eq:AuxSys} to derive two properties  
of the dual solution set \textdef{$\cS = \{ y\in \Rm : F(y) = 0\}$}
defined in~\eqref{eq:solutionSets}:
\begin{equation}
\label{item:Sunbdd}
\begin{array}{rl}
\text{Slater fails, } \cS \neq \emptyset \implies & 
\left\{
\begin{array}{l}
\text{(i): solution set $\cS$ is unbounded;}
\\\text{(ii) Jacobian at any solution $\bar{y} \in \cS$ is singular.}
\end{array}
\right.
\end{array}
\end{equation}
\Cref{thm:auxvectorNulFp} below clarifies the conditions that result in  
the unbounded dual solution set in~\Cref{item:Sunbdd}(i); it then
explains why we get an ill-conditioned Jacobian and thus
provides a rationale for regularization of the search direction at 
line \ref{line:algoJkReg} in \Cref{alg:1}.

\begin{theorem}
\label{thm:auxvectorNulFp}
Suppose that strict feasibility fails for the (primal) spectrahedron
\cref{eq:DNNparam} \emph{but} strong duality holds. 
Let $\bar{y}\in\cS$ and let $\lambda$ be any solution to \cref{eq:AuxSys}.
Then the following hold:
\begin{enumerate}[(i)]
\item \label{item:44-1}
The solution set $\cS$ is unbounded. 
Moreover, $\lambda$ provides a \textdef{recession direction},
$F(\bar{y}-t\lambda) = 0, \forall t \in \Rp$.
\item \label{item:44-2} 
The directional derivative of $F$ at $\bar y$ along $\lambda$ exists and is equal to zero.
\item \label{item:44-3} 
In addition suppose that $F$ is differentiable at $\bar y\in\cS$. Then the Jacobian $F^\prime( \bar y)$ is singular.
Moreover, $\lambda \in \nul F^\prime( \bar y)$.
\end{enumerate}
\end{theorem}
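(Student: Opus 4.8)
The plan is to exploit the facially reduced picture together with the elegant projection formula~\cref{eq:elegprojform}. The key structural fact is that if $\lambda$ solves~\cref{eq:AuxSys}, then $Z = \cA^*\lambda \succeq 0$ exposes $\cF$, so $\range V \subseteq \ker Z = \ker \cA^*\lambda$, and hence $(\cA^*\lambda) V = 0$, i.e.~$\cA^*\lambda$ is ``invisible'' when we conjugate by the facial range vector $V$. This is already recorded in~\Cref{lem:AiVdep}.

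For part~\eqref{item:44-1}: since strong duality holds, fix any $\bar y \in \cS$, so $X^* = P_{\Snp}(W + \cA^*\bar y)$ and $\cA X^* = b$. I would show $P_{\Snp}(W + \cA^*(\bar y + t\lambda)) = P_{\Snp}(W + \cA^*\bar y)$ for every $t\in\R$. Write $Y_t := W + \cA^*\bar y + t\,\cA^*\lambda$. Decompose $X^*$ as in~\cref{eq:spectrX} with range $V$; since $\cA^*\lambda$ vanishes on $\range V$ (and is PSD with range inside $\ker V$), one checks that $X^*$ and $-Z^* = -(X^* - W - \cA^*\bar y)$ remain the PSD/NSD parts of the Moreau decomposition of $Y_t$: indeed $Y_t = X^* + (-Z^* + t\,\cA^*\lambda)$, the first summand is PSD, the second is $\preceq 0$ because $-Z^* \preceq 0$ and $t\,\cA^*\lambda$ lives in the orthogonal block where $Z^*$ may be supported — here I must check $\langle X^*, -Z^* + t\,\cA^*\lambda\rangle = 0$, which follows since $\langle X^*, Z^*\rangle = 0$ and $\langle X^*, \cA^*\lambda\rangle = \langle \cA X^*, \lambda\rangle = \langle b, \lambda\rangle = 0$. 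The subtle point is that the two pieces are not merely orthogonal but actually commute (block-diagonalize simultaneously): $\cA^*\lambda$ has range inside $\range \bar V = \ker X^*$, and on that block $-Z^* + t\,\cA^*\lambda \preceq 0$ need not hold unless $\cA^*\lambda$ is supported in the \emph{kernel} of $Z^*$ too. Resolving this requires observing that $\langle Z^*, \cA^*\lambda\rangle = \langle \cA Z^*\!, \ldots\rangle$ is not obviously zero; instead I would argue directly that $P_{\Snp}(Y_t)$ has the same range as $X^*$ for all $t\ge 0$ by a continuity/face argument, or more cleanly, pass to the facially reduced coordinates: $\bar F(\bar y + t\lambda)$ uses $P_{\Srp}(V^T(W + \cA^*\bar y + t\cA^*\lambda)V) = P_{\Srp}(V^T(W+\cA^*\bar y)V)$ since $V^T\cA^*\lambda\, V = 0$, so $\bar F$ is literally constant along $\lambda$, and by the equivalence of $\bar F(y)=0$ and $F_f(y)=0$ in~\Cref{th:dualityFR}\cref{item:strDualityFR} together with $\cS \subseteq \cS_f$, we get $\bar y + t\lambda \in \cS_f$; one then upgrades to $\cS$ using that $P_f = P_{\Snp}$ on the relevant set, which is where~\Cref{example:DiffSolSet} warns care is needed — but unboundedness of $\cS$ only needs the recession direction to stay in $\cS$ for a ray, and this holds because $W + \cA^*(\bar y + t\lambda)$ stays in the open set where the PSD-part's range is locked, for $t$ in a neighborhood of $0$; combined with affineness this gives the whole line. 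Since $\lambda \neq 0$, $\cS$ is unbounded.

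For parts~\eqref{item:44-2} and~\eqref{item:44-3}: once $F(\bar y + t\lambda) = 0$ for all $t$ in an interval around $0$, the directional derivative $F'(\bar y;\lambda) = \lim_{t\downarrow 0}(F(\bar y+t\lambda) - F(\bar y))/t = 0$ immediately. And if $F$ is (Fréchet) differentiable at $\bar y$, then $F'(\bar y)\lambda = F'(\bar y;\lambda) = 0$, so $\lambda \in \nul F'(\bar y)$, and since $\lambda \neq 0$ the Jacobian $F'(\bar y)$ is singular. These two parts are essentially free once part~\eqref{item:44-1} is in hand.

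The main obstacle is the rigorous justification in part~\eqref{item:44-1} that the projection is genuinely \emph{constant} along the $\lambda$-direction — the Moreau-decomposition bookkeeping needs the perturbation $t\,\cA^*\lambda$ to be absorbed into the negative-semidefinite part without disturbing complementarity. The clean route is to do all of this in the facially reduced space $\Srp$, where $V^T(\cA^*\lambda)V = 0$ makes the claim transparent, and then transfer back via the elegant formula~\cref{eq:projfaceVSrVt}; the only residual care is the gap $\cS \subsetneq \cS_f$, which I would handle by restricting to a small interval of $t$ (enough for recession/derivative conclusions) on which $W + \cA^*(\bar y + t\lambda)$ has the same inertia as $W + \cA^*\bar y$, so that $P_{\Snp}$ and $P_f$ agree there.
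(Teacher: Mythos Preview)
Your Moreau-decomposition approach to part~\ref{item:44-1} is the same idea as the paper's---both amount to checking that a perturbed triple still satisfies the KKT conditions~\cref{eq:KKTprobone}---but you have missed the one-line trick that makes the whole argument trivial. You try to show $Y_t = X^* + (-\bar Z + t\,\cA^*\lambda)$ is a Moreau decomposition and correctly flag that $-\bar Z + t\,\cA^*\lambda \preceq 0$ need not hold; you then propose an elaborate detour through the facially reduced space, the gap $\cS\subsetneq\cS_f$, and a local inertia argument. The paper instead simply moves along $-\lambda$: for $t\ge 0$ consider $(\bar X,\ \bar y - t\lambda,\ \bar Z + t\,\cA^*\lambda)$. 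Now the dual slack $\bar Z + t\,\cA^*\lambda$ is a sum of two positive semidefinite matrices, hence trivially $\succeq 0$; dual feasibility is preserved by a telescoping cancellation; and complementarity follows from $\langle \cA^*\lambda,\bar X\rangle = \langle\lambda,\cA\bar X\rangle = \langle\lambda,b\rangle = 0$. Invoking \Cref{th:duality}\ref{it:thduality-3} gives $\{\bar y - t\lambda : t\ge 0\}\subseteq\cS$, so $\cS$ is unbounded. No facial reduction, no $\cS_f$, no inertia continuity is needed.

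Your arguments for parts~\ref{item:44-2} and~\ref{item:44-3} are fine and match the paper's: once $F$ is constant along a ray through $\bar y$, the directional derivative vanishes, and under differentiability this yields $\lambda\in\nul F'(\bar y)$. The only real gap in your write-up is therefore in part~\ref{item:44-1}: your proposed workaround (restrict to small $t$ where the inertia is constant) would at best give a short segment, not a ray, and the ``upgrade from $\cS_f$ to $\cS$'' step is precisely what \Cref{example:DiffSolSet} shows can fail in general. Flipping the sign of $t$ removes all of these difficulties.
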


\begin{proof}
\Cref{item:44-1}:
Let $(\bar X, \bar{y},\bar{Z})$ be a triple that satisfies the optimality conditions in \cref{eq:KKTprobone}.
We now let $\lambda$ be a solution to the auxiliary system \cref{eq:AuxSys} and $Z := \cA^*\lambda  \succeq 0$. 
We aim to show that, for any $t>0$, the triple $(\bar X, \bar{y}-t\lambda,\bar{Z}+t Z)$ also satisfies the optimality conditions. Indeed, for all $t>0$, we have
$\bar Z+ t Z \succeq 0$  and
\[
\begin{array}{rcl}
0 
&=&
\bar X-W-\cA^*\bar y- \bar Z
\\&=&
\bar X-W-\cA^*(\bar y-t \lambda)- (\bar Z+ t Z).
\end{array}
\]
The verification of primal feasibility is trivial. Finally complementarity follows:
\[
\langle \bar Z + t Z, \bar X \rangle = t \langle Z, \bar X \rangle = t \langle \lambda, \cA \bar X \rangle = t \langle \lambda, b \rangle = 0, \quad \forall t >0,
\]
where the last equality follows from~\Cref{eq:AuxSys}. Finally, by the proof of~\Cref{th:duality}~\ref{it:thduality-3} we conclude that $\bar{y} - t \lambda $ is a root of $F$ for all $t >0$, or equivalently,
\[
\{ \bar{y} - t \lambda : t \in \R_+ \} \subseteq \cS.
\]

\Cref{item:44-2}: This directly follows from the fact that $F(\bar y) = F(\bar y - t \lambda)$ for all $y\in\cS$ and $t\in\R_+$.

\Cref{item:44-3}: Suppose $F$ is differentiable at a point $\bar y \in \cS$. Then the partial derivative of $F$ at $\bar y$ in the direction of $\lambda$ is given by
\[
F'(\bar y)\lambda = 0,
\]
where $F'(\bar y)$ denotes the Jacobian of $F$ at $\bar y$. 
\samepage
\end{proof}

We note that the system \eqref{eq:AuxSys}  may contain multiple linearly independent solutions. Let $\{\lambda^1,\ldots, \lambda^k\}$ be a set of linearly independent solutions to \cref{eq:AuxSys}.
Hence by \Cref{thm:auxvectorNulFp} 
we deduce that 
the solution set $\cS$ contains a $k$-dimensional recession cone.
Moreover, 
if the differentiability of $F$ at $\bar{y}$ is further assumed, 
$\nul F^\prime(\bar{y})$ contains at least $k$ zero singular values. 
Another interesting consequence of \Cref{thm:auxvectorNulFp} is that 
if $F^\prime(\bar{y})$ is nonsingular, then strict feasibility holds for $\cF$.

The unboundedness of the set $\cS$ immediately translates into the
unboundedness of the set of optimal solutions of the dual
problem~\Cref{eq:zerogap}. The proof of~\Cref{thm:auxvectorNulFp}~\Cref{item:44-1} shows that the triple $(\bar X, \bar y - t\lambda, \bar Z + t \cA^*\lambda)$ satisfies the optimality conditions~\Cref{eq:KKTprobone}  for all $t\in\R_+$. 
Therefore, the unbounded set
\[
\{ (\bar y, \bar Z) + t(-\lambda, \cA^* \lambda) \, : \, t\in\R_+\}
\]
constitutes recession directions of the set of dual solutions.

Furthermore, 
as stated in \cite{MR0489903} (and related papers), the dual optimal
set is unbounded when Mangasarian-Fromovitz type constraint
qualifications fail. Here this means that 
the dual optimal set is unbounded 
when strict feasibility fails for the primal~\cref{eq:DNNparam}, since
we assume that $\cA$ is surjective.
Our dual~\Cref{alg:1} can encounter numerical difficulties in this
setting.
We now provide the details why the norms of the dual
variables typically diverge.

Let $\bar{y} \in \cS$.
Suppose that we are at a point $\hat{y}$ such that 
$0\neq \epsilon = F(\hat{y}),\, \hat{y} = \bar{y} + \Delta y$.
We note that 
\[
\epsilon = F(\bar{y} + \Delta y)  - F(\bar{y}) \approx F^\prime
(\bar{y}) \Delta y.
\]
When $||\epsilon||$ is small, $\Delta y$ is close to being in $\nul (F^\prime (\bar{y}))$.
We have shown in \Cref{thm:auxvectorNulFp} that 
a solution $\lambda$ to \cref{eq:AuxSys} always satisfies 
\[
F(\bar{y} + \lambda) = 0 \text{ and } F^\prime (\bar{y}) \lambda =0.
\]
Therefore, \emph{large} choices for $\Delta y$ 
that include a large component from the nullspace is possible. This is
in particular true when the Jacobian is singular at the optimum and the
regularization parameter is converging to zero.

A typical behaviour of \Cref{alg:1} in the absence of strict feasibility
is illustrated in~\Cref{fig:ConvBehaviour}, i.e.,~we see the growth of
the norms of the dual variables.
\begin{figure}[h]
\centering
\includegraphics[height=5cm]{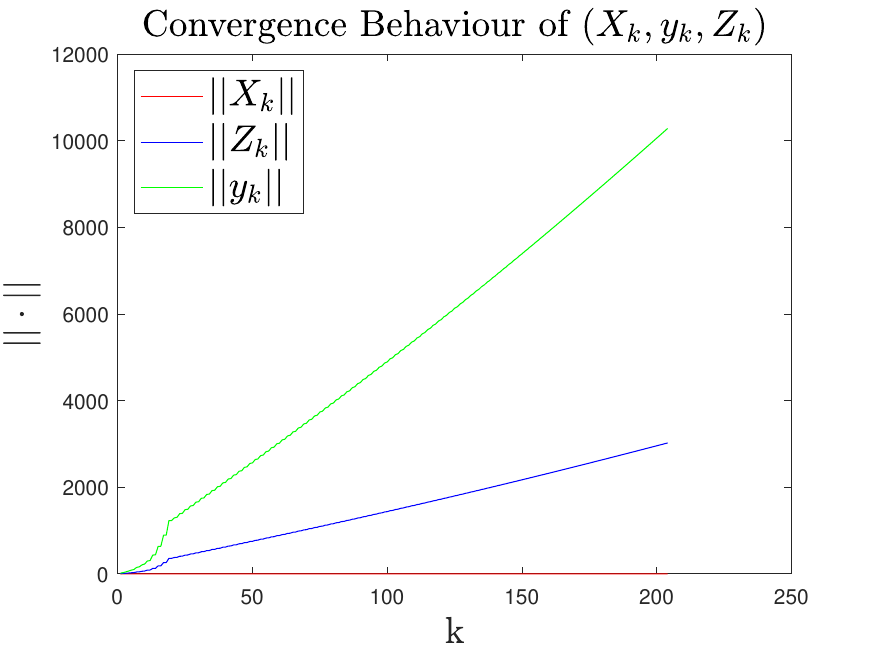}
\caption{Typical behaviour of $\{(X_k,y_k,Z_k)\}$ from~\Cref{alg:1}  in the absence of strict feasibility.}
\label{fig:ConvBehaviour}
\end{figure}

\subsection{Jacobian Behaviour Near-Optimum and Degeneracy} 
\label{sec:JacoDegen}

In this section we study properties of the Jacobian of $F$ computed near the optimal point 
and relate its behaviour to the degeneracy status of the optimal point.
We show that the degeneracy status of the
optimal point \emph{characterizes} the singularity of the Jacobian matrix.

We extend the discussion of computing the Jacobian presented in  \Cref{lemma:JacobianRepre} and elaborate the computational steps. 
Let $(\bar{X}, \bar{y}, \bar{Z})$ be an optimal triple that solves 
\cref{eq:KKTprobone}.
We further assume that $\bar{X}$ and $\bar{Z}$ satisfy \emph{strict complementarity}. 
\linelabel{line:strictCSassumed} \label{pg:strictCSassumed} 
Since $\bar{X}$ and $\bar{Z}$ are mutually orthogonally diagonalizable,
we obtain
\[
\bar X - \bar Z = W+ \cA^*(\bar{y}) = 
\begin{bmatrix} V & \bar{V} \end{bmatrix}
\begin{bmatrix} R & 0 \\ 0 & -S \end{bmatrix}
\begin{bmatrix} V & \bar{V} \end{bmatrix}^T, \quad R\succ 0,\,S\succ 0,
\]
where $\bar{X} = VRV^T$ and $\bar{Z} = \bar{V}S\bar{V}^T$.

Recall the steps for computing the Jacobian in \Cref{sect:ADDF} and the
rotation operator in~\cref{eq:Rrotat}. Since we are now assuming that the matrix $\bar{X}-\bar{Z} = W+ \cA^*(\bar{y})$ is nonsingular, then the matrices of eigenvectors in~\cref{eq:Rrotat} are given by $V_{\alpha} = V$, $V_\beta = 0$ and $V_{\gamma} = \bar{V}$. Hence, the rotation operator we are interested in now is  $\cR_{\bar{X}-\bar{Z}}(\cdot)=\begin{bmatrix} V & \bar{V} \end{bmatrix}
(\cdot)\begin{bmatrix} V & \bar{V} \end{bmatrix}
^T$.
We now closely observe how the~$(i,j)$-th element of the Jacobian in \cref{eq:eachJacCol} is evaluated.
Let $T_j$ be the matrix defined in~\eqref{def:Ti}. Then 
\begin{equation}
\label{eq:expandingIJ}
\begin{array}{rcl}
&& \trace(A_i \mathcal{R}_{\bar{X}-\bar{Z}}(T_j)) \\
& = &
\left\<   A_i , \begin{bmatrix} V &\bar{V} \end{bmatrix}  
T_j \begin{bmatrix} V &\bar{V} \end{bmatrix}^T
\right\>  \\
& = &
\left\<  \begin{bmatrix} V &\bar{V} \end{bmatrix}^T A_i 
\begin{bmatrix} V &\bar{V} \end{bmatrix} ,   
T_j
\right\>  \\
& = & 
\left\<  
\begin{bmatrix} V^T A_i V &V^T A_i \bar{V} \\
\bar{V}^T A_i V & \bar{V}^T A_i \bar{V} \end{bmatrix} ,   
 \begin{bmatrix}
V^TA_j V & \Omega_{\alpha\gamma} \circ V^TA_j \bar{V} \\
\left( \Omega_{\alpha\gamma} \circ V^TA_j \bar{V} \right)^T & 0
\end{bmatrix}
\right\>
\\
& = & 
\left\<  
\begin{bmatrix} V^T A_i V &V^T A_i \bar{V} \\
\bar{V}^T A_i V & 0 \end{bmatrix} ,   
 \begin{bmatrix}
V^TA_j V &\Omega_{\alpha\gamma} \circ V^TA_j \bar{V} \\
\left( \Omega_{\alpha\gamma} \circ V^TA_j \bar{V} \right)^T & 0
\end{bmatrix}
\right\> .
\end{array}
\end{equation}
\linelabel{line:identargs} \label{pg:identargs}
Note that the two arguments in the last trace inner product in \cref{eq:expandingIJ} share the analogous block structure, with the difference arising from both the use of $A_i$ versus $A_j$ and an element-wise (Hadamard) scaling applied to the off-diagonal block.
\Cref{lemma:JacodianReducedtoVec} below links the degeneracy of the optimal point $\bar{X}$ to the invertibility of the Jacobian of $F$ at $\bar{y}$.

\begin{lemma} 
\label{lemma:JacodianReducedtoVec}
Let~$D\in \Snpp$ be a diagonal matrix, and 
let~$\{x_1,\ldots,x_m\} \subseteq \Rn$ be given. 
Let~${U} =  \begin{bmatrix} x_1 & x_2 & \cdots & x_m\end{bmatrix}$.
Then 
$\rank({U}) = \rank({U}^T {U}) = \rank ({U}^T D {U})$.
\end{lemma}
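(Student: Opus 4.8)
The plan is to show that all three matrices share the same null space and then invoke the rank--nullity theorem, so that equality of nullities forces equality of ranks.

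First I would record the classical real-field fact that $\Null(U^TU)=\Null(U)$ for any $U$. The inclusion $\Null(U)\subseteq\Null(U^TU)$ is immediate, and conversely if $U^TUx=0$ then $0=x^TU^TUx=\|Ux\|_2^2$, which forces $Ux=0$; here we only use positive definiteness of the Euclidean inner product. Applying rank--nullity on $\R^m$ to $U$ and to $U^TU$ then gives $\rank(U)=\rank(U^TU)$, which is the second equality of the statement.

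Next I would handle $U^TDU$. Since $D\in\Snpp$ is diagonal with strictly positive diagonal entries, it admits the symmetric positive definite (in particular invertible) square root $D^{1/2}$, the diagonal matrix of the square roots of the entries of $D$. Then $U^TDU=(D^{1/2}U)^T(D^{1/2}U)$, so applying the fact of the previous paragraph to the matrix $D^{1/2}U$ in place of $U$ yields $\rank(U^TDU)=\rank(D^{1/2}U)$. Finally, left multiplication by the invertible matrix $D^{1/2}$ is a bijection of $\R^n$, hence $\Null(D^{1/2}U)=\Null(U)$ and so $\rank(D^{1/2}U)=\rank(U)$. Chaining the three equalities gives $\rank(U)=\rank(U^TU)=\rank(U^TDU)$, as claimed. (Equivalently, one can do it in one line: $\Null(U^TDU)=\{x:\|D^{1/2}Ux\|_2=0\}=\{x:D^{1/2}Ux=0\}=\{x:Ux=0\}=\Null(U)$, the third equality by invertibility of $D^{1/2}$, and then apply rank--nullity, with $D=I$ recovering the $U^TU$ case.)

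There is no real obstacle in this argument; the only points requiring a moment's care are that the step ``$x^T(\cdot)x=0\Rightarrow(\cdot)=0$'' relies on working over $\R$ with a positive definite form, and that $D\in\Snpp$ being diagonal is exactly what guarantees the existence and invertibility of $D^{1/2}$ — both of which are given by hypothesis.
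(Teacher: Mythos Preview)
Your proof is correct and complete. The paper states this lemma without proof, treating it as an elementary linear-algebra fact, so there is no argument in the paper to compare against; your null-space argument via $D^{1/2}$ is the standard way to establish it. One very minor quibble: in your closing remark you say that $D$ being diagonal is ``exactly what guarantees the existence and invertibility of $D^{1/2}$''; in fact any $D\in\Snpp$ (diagonal or not) has a unique symmetric positive definite square root, so diagonality is convenient but not essential for that step.
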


Now we use \eqref{eq:expandingIJ}, \Cref{lemma:JacodianReducedtoVec},
and  \Cref{lemma:NondegenChar} to characterize the singularity of the
Jacobian of $F$ evaluated at an optimal solution.
We note that \cite[Theorem 3.1]{AlHaOv:95} addresses 
nonsingularity of Jacobians of linear \SDPp s
under nondegeneracy assumptions.
\linelabel{line:thmsingjacob} \label{pg:thmsingjacob}
\begin{theorem}
\label{thm:BadGoodJac}
Let $(\bar{X}, \bar{y}, \bar{Z})$ satisfy the optimality condition \eqref{eq:KKTprobone} and the strict complementarity condition.
Then $\bar{X}$ is degenerate if, and only if, the Jacobian of $F$ at $\bar{y}$ is singular.
\end{theorem}
\begin{proof}
Let $\bar{X}$ be the optimal point of \cref{eq:DNNparam}.
Let \[
D = \Diag \left( \svec \left(  
\begin{bmatrix}
M & \frac{1}{\sqrt{2}} \Omega_{\alpha\gamma}(\bar{X}) \\ \frac{1}{\sqrt{2}} \Omega_{\alpha\gamma}(\bar{X})^T & M
\end{bmatrix} \right) \right) \in  \Sc^{t(n)}_{++} ,
\] 
where $M = \frac{1}{\sqrt{2}} E + \left(1-\frac{1}{\sqrt{2}}\right)I$ and  where we include the dependence on $\bar{X}$ of $\Omega$ given as in~\Cref{eq:defOmega}.
Let $X_i := \begin{bmatrix} V^T A_i V &V^T A_i \bar{V} \\
\bar{V}^T A_i V & 0 \end{bmatrix}$, 
and let $x_i:= \svec \left( X_i \right)$.
We recall the definition of $T_j$ in \cref{def:Ti} and note that 
\[ 
\svec \left( T_j \right) = D x_j .
\]
We then observe the last inner product in \cref{eq:expandingIJ}:
\[
\left\< X_i, T_j  \right\>
= \left\< \svec(X_i) , \svec(T_j) \right\> = \<x_i, Dx_j\>.
\]
Now we form $U :=  \begin{bmatrix} x_1 & x_2 & \cdots & x_m\end{bmatrix} \in \R^{t(n) \times m}$. 
Then, $\forall i,j$, we have 
\[
(U^TDU)_{i,j} 
=
\left(
\begin{bmatrix} x_1^T \\ \vdots \\ x_m^T \end{bmatrix}
\begin{bmatrix} Dx_1 \cdots Dx_m   \end{bmatrix}
\right)_{i,j} = x_i^TDx_j = \trace(A_i \mathcal{R}_{\bar{X}}(T_j)).
\]
Therefore we conclude 
\[
\begin{array}{rcll}
\bar{X} \text{ is degenerate } 
& \iff & \rank(U) < m,  & \text{by \cref{eq:RorateAis},}\\
&\iff &U^TDU \text{ is singular, } & \text{by
\Cref{lemma:JacodianReducedtoVec},} \\
&\iff & \text{Jacobian of $F$ at $\bar{X}$ is singular.} & 
\end{array}
\]
\samepage
\end{proof}

Recall the sufficient conditions for producing a nondegenerate solution
given in \Cref{prop:relintNodegen,prop:twonondeg}. Therefore,
any projection point $\bar{X}$ that satisfies the conditions in
\Cref{prop:relintNodegen,prop:twonondeg} yields a nonsingular Jacobian.

\subsection{Nondegeneracy of the Elliptope and Degeneracy of the Vontope}
\label{sec:degenofMCQAP}

We study degeneracies of two classes of spectrahedra; the elliptope (the set of correlation matrices), and the vontope (feasible region of the \SDP relaxation of the quadratic assignment problem, \QAPp).
We introduce these sets to illustrate how degeneracy interacts with the performance of \Cref{alg:1} in \Cref{sec:NumericsMCQAP}.
We exhibit the result from \cite{PatakiSVW:99} that the elliptope has only nondegenerate points; 
however all vertices of the vontope are degenerate before \FRp,
and some vertices  of the vontope are degenerate even after \FRp.

\index{vontope}

\begin{example} [{Elliptope, \cite[Thm 3.4.2]{PatakiSVW:99}}]
\label{eq:ellipt}
We consider the problem of finding the nearest correlation matrix:
\[
\min \left\{ \frac 12 \|X - W \|^2_F \,:\, \diag(X)=e, \, X\succeq
0\right\},
\]
where $e$ is the vector of all ones.
The feasible region of the above problem is called the 
\textdef{elliptope}. Note that the elliptope is the feasible
region of the \SDP relaxation of the max-cut problem.
Every point in the elliptope is nondegenerate. 
\end{example}

\begin{example}[Vontope, \cite{KaReWoZh:94}]
Let \textdef{$\Pi_n$} be the set of $n$-by-$n$ permutation matrices. For $X\in \Pi_n$, let
\[
Y_X = y_Xy_X^T, \text{ where }
y_X = \begin{pmatrix}
1\cr \kvec X
    \end{pmatrix} \in \mathbb{R}^{n^2+1},
\]
be the lifted matrix.  Here we index the rows and columns of a matrix starting from $0$.
The lifting process gives rise to the following feasible region for the \SDP relaxation:
\begin{equation}
\label{eq:QAPset}
\textdef{$\cF_{\QAP}$} := 
\left\{Y \in \Sc^{n^2+1}_+ : 
\begin{array}{l}
G_J(Y) = e_0,
\bdiag(Y)=I_n,
\odiag(Y)=I_n, \\
Y_{0,j} = Y_{j,j}, \forall j=1,\ldots,n^2 +1 
\end{array}
\right\}.
\end{equation}
Here, \textdef{$G_J$} : $\Sc^{n^2+1} \to \R^{|J|}$ is a linear map that 
chooses the elements in the index set $J$ that correspond to the $(0,0)$-element of $Y$, the off-diagonal elements of the $n$-by-$n$
diagonal blocks, and the diagonal elements of the $n$-by-$n$
off-diagonal blocks; $\bdiag : \Sc^{n^2+1} \to \Sn$ is the linear map that sums the $n$-by-$n$ diagonal blocks;
and  $\odiag : \Sc^{n^2+1} \to \Sn$  is the linear  map  defined by
$\odiag(Y) = \left( \trace ( \hat{Y}^{i,j} ) \right)_{i,j} $, where $\hat{Y}^{i,j}$ is the $(i,j)$-th $n$-by-$n$ block submatrix in $Y$; 
see \cite{KaReWoZh:94} for details on the construction of $G_J,\bdiag$ and $\odiag$.

We remark that the expression in \eqref{eq:QAPset}  contains redundant linear constraints. It is well-known that the \SDP relaxation of the \QAP fails strict
feasibility \cite{KaReWoZh:94} and so we employ \FR and work in a smaller space. 
Let 
\[
H = \begin{bmatrix} e^T\otimes I_n \cr I_n\otimes e^T\end{bmatrix} 
\in \R^{2n\times n^2}, \ 
K = \begin{bmatrix} -e & H \end{bmatrix} \in \R^{2n\times (n^2+1)} ,
\]
and let $\hat V \in \R^{(n^2+1)\times ((n-1)^2+1)}$ 
be the matrix with orthonormal columns that spans 
$\nul(K)$.\footnote{Note that the last row of $K$ is linearly 
dependent and, for efficiency and accuracy,
is best ignored when finding the nullspace.
}
\FR leads to the following constraints:
\begin{equation}
\label{eq:QAPsetFR}
\textdef{$\cF^{\FR}_{\QAP}$} := 
\{R \in \Sc^{(n-1)^2+1} : G_{\hat{J}} (\hat V R\hat V^T) = e_0, \, R\succeq 0 \},
\end{equation}
where $G_{\hat{J}}$ : $\Sc^{n^2+1} \to \R^{|\hat{J}|}$  a newly defined surjective linear map that chooses indices in $\hat{J}$ such that $\hat{J}\subsetneq J$.
This aligns with the fact that \FR reveals implicit redundant constraints. 
It is known that the number of equality constraints reduces to $n^3-2n^2+1$ after \FRp ; see~\cite{KaReWoZh:94}.\footnote{
The last column of off-diagonal blocks and the $(n-2,n-1)$ off-diagonal
block are linearly dependent and are ignored, see~\cite{GHILW:20,KaReWoZh:94}.
} 

We now discuss the degeneracy of each lifted matrix $Y_X = y_Xy_X^T = \hat V R_X\hat V^T$, where $X\in \Pi_n$. Due to the orthonormality of $\hat{V}$, we get
\[
R_X = \hat V^T Y_X \hat V \in \Sc^{(n-1)^2+1}.
\]
We note that $\rank(R_X)=1$.
We let $\{A_i\}_{i=1}^{n^3-2n^2+1}\subseteq \Sc^{(n-1)^2+1}$ be the
set of matrices defining data matrices for the affine constraints.
Hence the linear dependence of the matrices of the set
\cref{eq:RorateAis} can be analyzed by considering their first columns.  
For $n\geq 3$, we observe that the vectors
\[
\left\{ 
\begin{pmatrix} V_X^TA_iV_X \\ \bar{V}_X^TA_i V_X \end{pmatrix} 
\right\}_{i=1}^{n^3-2n^2+1}
\subseteq \R^{(n-1)^2+1}, \quad 
{n^3-2n^2+1} > (n-1)^2+1, \ 
n\geq 3,
\]
are linearly dependent. 
This proves that the rank-one vertices that arise from $\Pi_n$ remains  degenerate after \FRp .
\end{example}

\begin{remark}
\label{rem:Asuppnotat}
If we replace $\Snp$ with $\Rnp$, the set $\cF$ reduces to a polyhedron and the discussion on the degeneracy simplifies.
The degeneracy status of a point $x$ in a polyhedron can be confirmed by evaluating the rank of 
$A(:,\supp(x))$, where $\supp(x)$ $=\{i: x_i \ne 0\}$ denotes the support of $x$; see~\cite{PatakiSVW:99}. 
The performance of the proposed algorithm in \cite{CensorMoursiWeamsWolk:22} is also affected by the degeneracy of the optimal point. 
Moreover every point of $\cF$ as a polyhedron is degenerate in the absence of strict feasibility.
\end{remark}

\section{Numerical Experiments}
\label{sec:Numerics}

To illustrate the effects on convergence and degeneracy, we now present
multiple experiments using diverse spectrahedra $\cF$ with various
ranges of values for the \emph{singularity degree, $\sd(\cF)$}, and
for the \emph{implicit problem singularity, $\iips(\cF)$}. In our algorithm,
dual feasibility and complementary slackness are satisfied exactly.
Therefore, we use the following $\beps^k\in \R_+$ 
to denote the relative residual of
the optimality conditions at iteration $k$:
\index{implicit problem singularity, $\iips$}
\index{$\iips$, implicit problem singularity} 
\index{$\beps^k$, relative residual vector}
\index{relative residual vector, $\beps^k$}
\[
	\beps^k := \min \left\{1,\frac{\|F(y^k)\|}{1 + \|b\|}\right\}=:
 \alpha_k 10^{-t_k},\, 1\leq \alpha_k < 10.
\]
We denote the condition number of the Jacobian of $F$ 
at $y^k$ as $\cond(J_k)$, and let
\[
\cond(J_k) = \beta_k 10^{s_k},\, 1\leq \beta_k < 10.
\]
We stop Algorithm~\cref{alg:1} once 
\[
(i)~~\beps^k \leq 10^{-13} \text{  or   } (ii)~~s_k + t_k > 16 
\text{  or } (iii)~~k > 2000.
\]

If condition (i) holds, then the we consider the \BAP problem is solved. 
If condition (ii) holds, then we consider the optimal solution
of the \BAP problem as being degenerate. In our algorithm, if (ii) or
(iii) hold, then we conclude that a small eigenvalue for the Jacobian
exists and we assume that strict feasibility fails.\footnote{Note that
by \Cref{rem:characdeggeneric}, nondegeneracy holds for
our problem generically.} And,
by looking at the nonzero elements of
an eigenvector associated to the \emph{smallest eigenvalue} 
we get information on an exposing vector; and
we identify constraints that give rise to the failure of strict
feasibility. This solves an auxiliary system for a \FR step, 
see~\Cref{prop:thmalt}.
Using the information on the exposing vector,
we then solve a \textdef{reduced auxiliary system}, using a
\textdef{Gauss--Newton} approach\footnote{\url{https://github.com/j5im/FacialReductionSpectrahedron}}. This results in a \FR step.
Following this, we remove the
redundant constraints that arise from the \FR step. We repeat
until strict feasibility holds. 

Numerical experiments are conducted with \matlab R2023b on a Windows 11
PC with Intel(R) Core(TM) i5-10210U CPU @ 1.60GHz, RAM 16.0GB.

\subsection{Comparison With(out) Strict Feasibility}

As expected, our tests in \Cref{table:SnSold}, show that
\Cref{alg:1}  performs exceptionally well for instances with strict
feasibility but struggles when strict feasibility fails.
In fact, we observe that \Cref{alg:1} achieves
the relative precision of $10^{-7}$ in under $7$ iterations
when strict feasibility holds. In contrast, when strict
feasibility fails and \Cref{alg:1} converges, hundreds 
of iterations are needed to reach the desired precision. In  \Cref{table:SnS}, we repeat the same experiment setting a relative precision tolerance of $10^{-13}$ and allowing $2000$  iteration limit. In this case, \Cref{alg:1} never reached the desired relative precision within the maximum number of iterations.
\begin{table}[htp!]
\begin{center}
\begin{tabular}{|l|c|c|c|c|c|} 
 \hline 
n& 10& 20& 50& 100\\ 
 \hline 
Slater& 100\%& 100\%& 100\%& 100\%\\ 
  
No Slater& 55\%& 50\%& 50\%& 25\%\\ 
 \hline 
\end{tabular}

\end{center}
\caption{$20$ randomly generated problems~\Cref{eq:DNNparam};  
\% converged $\beps^k \leq 10^{-8}, k\leq 1000$.}
\label{table:SnSold}
\end{table}

\begin{table}[htp!]
\begin{center}
\begin{tabular}{|l|c|c|c|c|c|} 
 \hline 
n& 10& 20& 50& 100\\ 
 \hline 
Slater& 100\%& 100\%& 100\%& 100\%\\ 
  
No Slater&  0\%&  0\%&  0\%&  0\%\\ 
 \hline 
\end{tabular}

\end{center}
\caption{$20$ randomly generated problems~\Cref{eq:DNNparam};  
\% converged $\beps^k \leq 10^{-13}, k\leq 2000$.}
\label{table:SnS}
\end{table}

We now look at the case where the singularity degree $\sd(\cF)=1$, while 
the implicit singularity $\iips(\cF)$ varies.

\subsubsection{$\iips(\cF) = 1$}
\label{sect:iips1}

We use a spectrahedra with singularity degree $1$ and $n=15, m=7$. 
The singularity degree is obtained by
constructing an exposing vector as a linear combination of $5$
out of the $7$ constraints of the problem. \Cref{alg:1} is used
to monitor the eigenvalues of the Jacobian of $F$ at every iteration $k$,
see~\Cref{fig:eigsexp1}. We observe that only one of the eigenvalues tends to~$0$. After $452$ iterations the method reaches a relative residual of
$9.9567 \times 10^{-8}$, while the condition number of the Jacobian is
$7.0236\times 10^{12}$. Therefore the algorithm stops and indicates
which of the
constraints are causing strict feasibility to fail. 
After applying \FR and removing the single (implicit) 
redundant constraint found,
the algorithm now succeeds and converges to a point with a
relative residual of $1.0231\times 10^{-15}$ in only $8$ iterations,
see~\Cref{tab:exp1one}.

\begin{figure}[ht!]
\centering
\includegraphics[height=.4\textwidth]{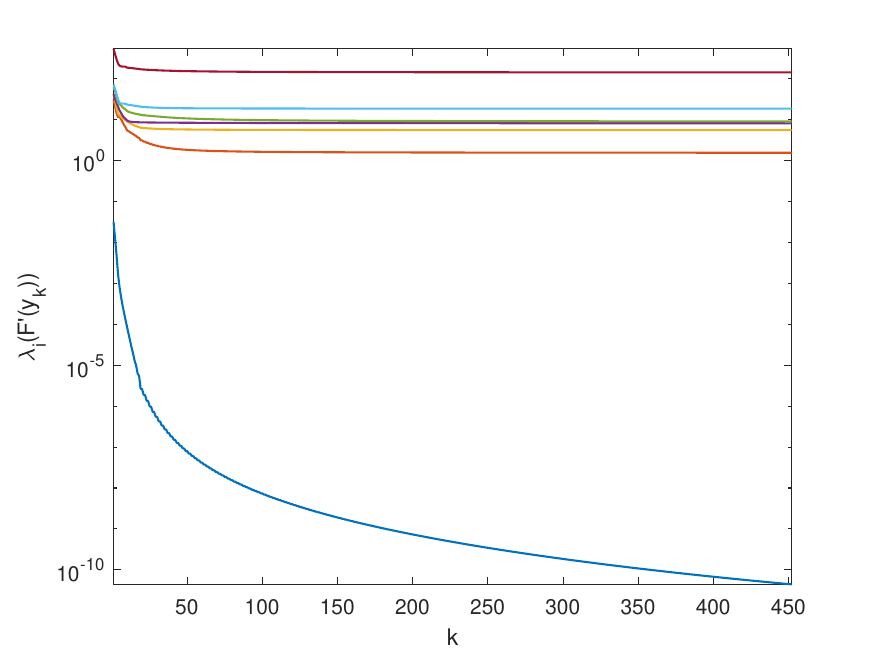}
\caption{Changes in eigenvalues of Jacobian of $F$ for
spectrahedron in \Cref{sect:iips1}.}
\label{fig:eigsexp1}
\end{figure}

\begin{table}[ht!]\centering
\setlength\extrarowheight{4pt}
\begin{tabular}{lcccccc} \toprule
&    $n$ & $m$ & $\beps^k$(rel. res.)  &$\mathrm{cond}(F'(y_k))$ & $\lambda_n(y_k)$ & $k$ \\[5pt]
  \hline 
Before \textbf{FR} &15 & 7 & 9.9567e-08 & 7.0236e+12 & -1.7238e-16 & 452\\[5pt] 

  \hline 
After \textbf{FR} &15 & 6 & 1.0231e-15 & 198.08 & 2.5515e-17 & 8\\[5pt] 

  \hline 
\end{tabular}

\caption{Spectrahedron in \Cref{sect:iips1}; at final iteration $k$;
		before and  after \FR iters}
\label{tab:exp1one}
\end{table}

\subsubsection{$\iips(\cF) > 1$}
\label{sect:iipsgtone}

In our second experiment, see~\Cref{tab:exp1two},
we work with data obtained from a \SDP relaxation of the
protein side-chain positioning problems, e.g.,~\cite{BurkImWolk:20}. 
The spectrahedron we are considering has singularity
degree $1$, but the implicit problem singularity is  greater than $1$,
i.e.,  there are more than $1$ redundant constraints after applying \FR.
In particular, the dimension of the space is $n=35$ and the number of
constraints is $m=75$.  By running our algorithm, we observe that a
large number of eigenvalues of the Jacobian tend to $0$ along the
iterations (see~\Cref{fig:exp2}). After applying \FR, we reduced the
dimension of the problem to $n=10$ and the number of constraints to
$m=22$. In the next run of the algorithm, only one eigenvalue of the
Jacobian tends to $0$, but we detect that a second iteration of \FR is
needed. This time, we reduce $n$ to $9$ and we remove $6$ more redundant
constraints, resulting in $m=16$. The next time we apply our algorithm,
the method converges to the solution in $18$ iterations, 
see~\Cref{tab:exp1two}.

\begin{figure}[ht!]\centering
	 \includegraphics[height=.35\textwidth]{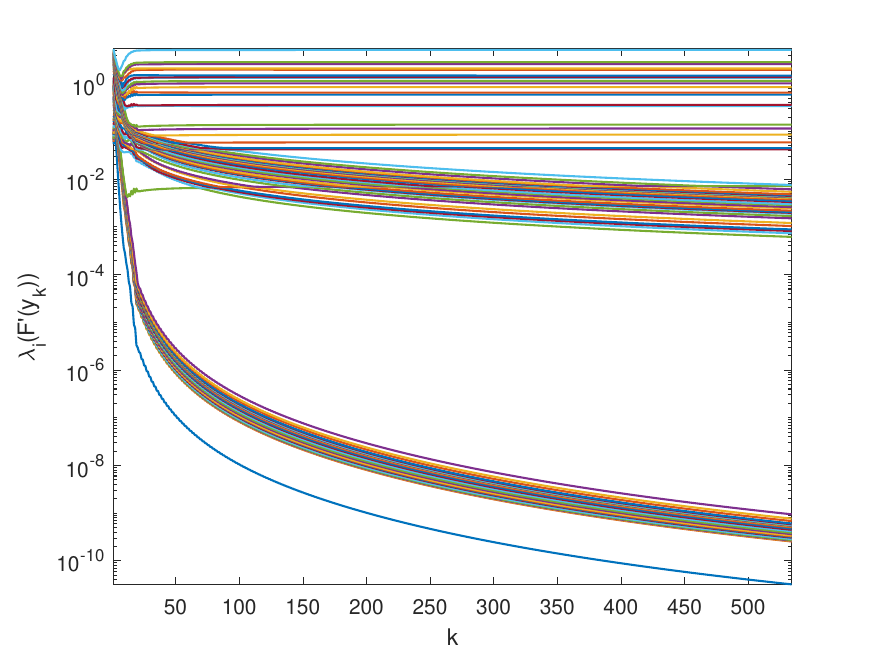}\hfill
	 \includegraphics[height=.35\textwidth]{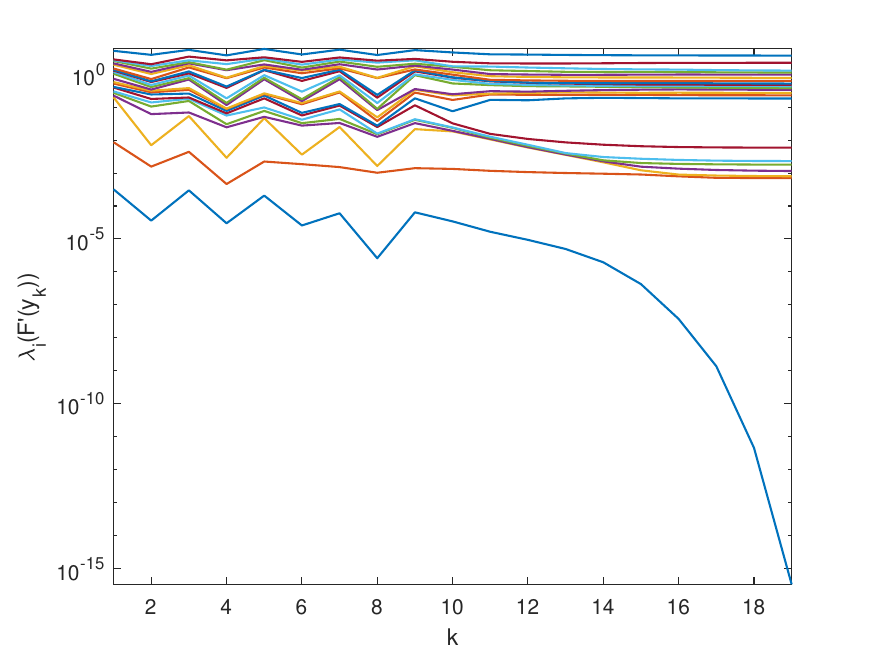}
\caption{Iterations $k$ vs eigenvalues; spectrahedron in
\Cref{sect:iipsgtone}; 
		before and  after one \FR iteration}
\label{fig:exp2}
\end{figure}

\begin{table}[ht!]\centering
\setlength\extrarowheight{4pt}
\begin{tabular}{lcccccc} \toprule
&    $n$ & $m$ & $\beps^k$(rel. res.)  &$\mathrm{cond}(F'(y_k))$ & $\lambda_n(y_k)$ & $k$ \\[5pt]
  \hline 
Before \textbf{FR} &35 & 76 & 8.5351e-08 & 1.0060e+12 & -1.6941e-15 & 534\\[5pt] 

  \hline 
After \textbf{FR}  1 &10 & 22 & 7.6363e-04 & 1.8739e+16 & -5.2097e-16 & 19\\[5pt] 

  \hline 
After \textbf{FR}  2 &9 & 16 & 8.7202e-14 & 16103.37 & -5.6900e-16 & 18\\[5pt] 

  \hline 
\end{tabular}

\caption{Spectrahedron in \Cref{sect:iipsgtone}; at final iteration $k$;
		before and  after \FR iters}
\label{tab:exp1two}
\end{table}

\subsection{Experiments with Elliptope and Vontope}
\label{sec:NumericsMCQAP}

In this section we address the importance of strict feasibility 
and degeneracy on the performance of
\Cref{alg:1}. We consider the elliptope and vontope cases.
Furthermore we compare the performance of \Cref{alg:1}
with the interior point solver 
SDPT3\footnote{\url{https://www.math.cmu.edu/~reha/sdpt3.html},
version SDPT3 4.0~\cite{ToddTohTut:96b}.}. 

\index{vontope}

From \Cref{sec:degenofMCQAP} we recall
that the \MC problem satisfies strict feasibility and every point
of the elliptope, the feasible set, is nondegenerate; see~\Cref{eq:ellipt}.
The results from the \MC problem are displayed in the line
labelled `Elliptope' in~\Cref{table:MCQAP}.
As for the \QAPp, without \FRp, 
the \SDP relaxation of \QAP fails strict feasibility and all the
feasible points are degenerate.
Hence in our tests, we consider two models of the same set of instances: 
\textdef{$\cF_{\QAP}$} obtained directly by the lifting of the variables
(see \eqref{eq:QAPset}); and \textdef{$\cF^{\FR}_{\QAP}$} obtained after
\FR is applied to $\cF_{\QAP}$ (see \eqref{eq:QAPsetFR}). 
In \Cref{table:MCQAP}, \QAP ($\QAPp_\FRp$, resp.) indicates the results
obtained from $\cF_{\QAP}$ ($\cF^{\FRp}_{\QAPp}$, resp.). 

We used two settings for the choice of $W$ in the objective function. 
The first setting for $W$ forces the optimal solution $\bar{X}$ to be rank $1$. 
Recall that rank-one optimal solutions for $\QAP$ are degenerate and thus 
lead to ill-conditioned Jacobians as can be seen by the huge condition
numbers demonstrated later in~\Cref{table:MCQAP}.
The second setting chooses a random $W$. 
Followed by the discussions in \Cref{rem:characdeggeneric} and \Cref{prop:twonondeg}, $\bar{X}$ is generically nondegenerate 
when strict feasibility holds.

For SDPT3 we provided the following second-order cone formulation of
\eqref{eq:DNNparam}:
\[
\min_{X,y,t} \left\{ \  t \ : \svec(X) + y = \svec(W), \ ||y||_2\le t , \  X\in \cF \ \right\}.
\]
The default settings for SDPT3 were used for the tests.

Each line of \Cref{table:MCQAP} reports on the average of $10$ instances,
problem order $n=10$. 
The meaning of the header names used in \Cref{table:MCQAP} is as follows:
\begin{enumerate}[(i)]
\itemsep0em 
\item 
The headers
`pf', `df' and `cs' under Semi-Smooth Newton refer to the average of the
primal feasibility, dual feasibility and complementarity, respectively,
introduced in~\eqref{eq:KKTproboneFR}. The df includes both the linear
dual feasibility and the violation of semidefiniteness. Both are
essentially zero up to roundoff error of the arithmetic.
Note that the values $10^{-15}$ and smaller for pf and df are essentially zero (machine precision).
The headers pf, df and cs under SPDT3 refer to
the solver outputs, pinfeas, dinfeas and gap, respectively.
\item $k$ is the average number of iterations.
\item time is the average run time in cpu-seconds.
\item $\mathrm{cond}(F^\prime(y^k))$ is the average condition number of
the Jacobian $(F^\prime(y^k))$; we only have this metric for the
semismooth Newton method.
\end{enumerate}

\begin{table}[h!]
\centering
	\scalebox{0.7}{
		\begin{tabular}{c|l|cccccc|ccccc}
\toprule
\multirow{2}{*}{$W$ Generation} & \multirow{2}{*}{Problem} & \multicolumn{6}{c|}{Semi-Smooth Newton} & \multicolumn{5}{c}{SDPT3}  \\
 & & pf & df & cs &   $k$ & time  & $\mathrm{cond}(F^\prime(y^k))$ & pf & df & cs &    $k$ & time\\
\midrule
& Elliptope & 9e-13 & 9e-16 &  2e-16 &  6.8 & 4e-02  & 3e+00 & 4e-12 & 6e-12 &  2e-07 & 15.5 & 2e-01 \\$W$, $\rank(\bar{X})=1$ & $\QAPp_{\FR}$ & 4e-07 & 2e-15 &  1e-16 &  7.5 & 7e+00  & 4e+15 & 5e-10 & 1e-09 &  9e-06 & 17.9 & 7e+01 \\& \QAP & 8e-09 & 3e-15 &  1e-16 &  8.6 & 2e+01  & 4e+14 & 5e-10 & 5e-09 &  1e-05 & 18.9 & 6e+01 \\\midrule
& Elliptope & 3e-12 & 1e-15 &  6e-17 &  6.3 & 1e-02  & 2e+00 & 1e-11 & 6e-12 &  3e-08 & 11.5 & 9e-02 \\random $W$ & $\QAPp_{\FR}$ & 2e-12 & 3e-15 &  7e-17 & 20.6 & 2e+01  & 3e+05 & 5e-10 & 5e-10 &  7e-07 & 13.9 & 5e+01 \\& \QAP & 1e-07 & 5e-13 &  3e-16 & 537.9 & 1e+03  & 6e+11 & 1e-08 & 2e-09 &  1e-06 & 17.3 & 7e+01 \\\bottomrule
\end{tabular}

}
\caption{\Cref{alg:1} and SDPT3 on: Elliptope and Vontope; $n=10$;
}
\label{table:MCQAP}
\end{table}

We now discuss the results in \Cref{table:MCQAP}.
We start with the Semi-Smooth Newton, \Cref{alg:1}.
The `pf' column clearly indicates that
the degeneracy of the optimal point $\bar{X}$ plays a significant role.
Except for random $W$ with $\QAPp_\FRp$, the `pf' values for the \QAP
 are poor. This correlates with the condition number values;
see also the discussion in \Cref{sec:FailureReg}.
The condition numbers of the Jacobian near optimal points,
$\cond(F^\prime(y^k))$, become ill-conditioned when strict feasibility
fails and the optimal solution is degenerate. 
The good measures for  `df' and `cs' of Semi-Smooth Newton method is attributed to the construction of \Cref{alg:1}.
We observe that the average number of iterations for \QAP is high when a random $W$ is constructed, 
since the algorithm's difficulty in achieving the stopping tolerance~$\epsilon$, set to $10^{-7}$.

\linelabel{line:sdpt3diff} \label{pg:sdpt3diff}
SDPT3 exhibits overall strong performance on all instances.
However, the `df' and `cs' values under SDPT3 are weaker compared to the Semi-Smooth Newton. This is due to interior point methods aiming to satisfy the first-optimality condition simultaneously throughout the process. 
We also observe that the number of iterations is higher when the
optimal solutions are set to be degenerate.

\Cref{alg:1} has a superior performance for \MC problems as all
components of the optimality conditions are satisfied with near machine
accuracy. 
This confirms that the status of the optimal solution plays an
important role when it comes to the performance of \Cref{alg:1}.
In addition, preprocessing the instances so that they satisfy strict
feasibility is important as seen by problems failing strict feasibility only
contain degenerate points; see~\Cref{thm:AiVdep}.


\section{Conclusions}
\label{sect:concl}

We presented and analyzed a semismooth Newton method for the best
approximation problem, the projection problem, for spectrahedra. 
We showed that nondegeneracy is needed for the semismooth Newton method 
to perform well.  
We used the unbounded dual optimal set in the absence of 
a regularity condition to explain the lack of good performance.
Moreover, we showed that the absence of strict feasibility results in
degeneracy and ill-conditioning of the Jacobian at optimality.
Our empirics illustrate the importance of strict feasibility.
In particular, we studied the degeneracy for the elliptope and vontope.

Though we concentrated on \SDPp, many current relaxations for hard
problems involve the doubly nonnegative, \DNNp, cone, i.e.,~$\DNN =
\Snp\cap \R^{n\times n}_+$. In particular, splitting methods efficiently
exploit this intersection of two cones and facial reduction often
provides a natural efficient splitting,
e.g.,~\cite{OliveiraWolkXu:15,WassersteinAlfakihetalRevised:24,GHILW:20}.
It seems that the
results we obtained from the Newton method for the \BAP would extend to
applying splitting methods to
feasible sets of the type $\cL\cap \DNN$. 

\subsection*{Data Availability Statement}
The results and data used in this paper is generated using our \matlab codes.
These are publicly available at the link:\\
\url{https:\\www.math.uwaterloo.ca/~hwolkowi/henry/reports/CodesProjDegSingDegJul2024.d/
}


\cleardoublepage
\addcontentsline{toc}{section}{Index}
\begin{theindex}
	
	\item $(x,y)$, open interval, \hyperpage{20}
	\item $C^+$, nonnegative polar cone of $C$, \hyperpage{6}
	\item $C^{\perp}$, orthogonal complement of $C$, \hyperpage{6}
	\item $F( y): = \cA P_{\Snp}(W+\cA^*  y) - b$, \hyperpage{10}
	\item $F_f( \hat  y): = {\mathcal  A} P_f( W+ {\mathcal  A} ^* \hat  y) - b$, 
	\hyperpage{13}
	\item $G_J$, \hyperpage{25}
	\item $K\unlhd C$, face, \hyperpage{6}
	\item $P_S$, projection onto a nonempty closed convex set $S$, 
	\hyperpage{5}
	\item $W\in \Sn$, data, \hyperpage{3}
	\item $X\succeq _{\mathcal  X} 0$, \hyperpage{4}
	\item $X^* = {\mathcal  V} (R^*(\widebar  W))$, \hyperpage{12}
	\item $X^*$, optimum, \hyperpage{3}
	\item $\Delta(X)$, Moreau regularization of $\iota_{\Snn}$, 
	\hyperpage{5}
	\item $\Pi \svec  W_i$, \hyperpage{8}
	\item $\Pi _n$, \hyperpage{25}
	\item $\beps^k$, relative residual vector, \hyperpage{27}
	\item $\cF = \cL\cap K$, feasible set, \hyperpage{4}
	\item $\cOn$, orthogonal matrices, \hyperpage{5}
	\item $\cdot^\dagger$, Moore-Penrose generalized inverse, 
	\hyperpage{12}
	\item $\face  (X,\mathbb  {S}_+^n)$, \hyperpage{6}
	\item $\iips$, implicit problem singularity, \hyperpage{7}, 
	\hyperpage{27}
	\item $\iota_S$, indicator function, \hyperpage{5}
	\item $\mathcal  {R}_{Y}$, \hyperpage{19}
	\item $\maxsd(\cF)$, max-singularity degree of $\cF$, \hyperpage{7}
	\item $\null A$, null space of $A$, \hyperpage{20}
	\item $\phi(y,Z)$, dual functional, \hyperpage{10}
	\item $\relint$, relative interior , \hyperpage{6}
	\item $\sd(\cF)$, singularity degree of $\cF$, \hyperpage{3}, 
	\hyperpage{7}
	\item $\svec  : \mathbb  {S}^k\to \mathbb  {R}^{t(k)}$, \hyperpage{8}
	\item $\widebar  F( y): = \bar  {\mathcal  A} P_{\mathbb  {S}_+^r}(\widebar  W+\bar  {\mathcal  A} ^* y) - \bar  b$, 
	\hyperpage{12}
	\item $\widebar  W = V^TWV\in \mathbb  {S}^r$, \hyperpage{12}
	\item $f := \face  ({\mathcal  F} ,\mathbb  {S}_+^n)$, the minimal face of $\mathbb  {S}_+^n$ containing~${\mathcal  F} $, 
	\hyperpage{12}
	\item $f^{\Delta }$, conjugate face of $f$, \hyperpage{6}
	\item $p^* = d^*$, zero duality gap, \hyperpage{20}
	\item $p^*$, optimal value, \hyperpage{3}
	\item $t(n) = n(n+1)/2$, triangular number, \hyperpage{8}
	\item ${\mathcal  F} ^{\textbf  {FR}\,}_{\textbf  {QAP}\,}$, 
	\hyperpage{26}, \hyperpage{30}
	\item ${\mathcal  F} _{\textbf  {QAP}\,}$, \hyperpage{25}, 
	\hyperpage{30}
	\item ${\mathcal  S} = \{ y\in \mathbb  {R}^m: F(y) = 0\}$, 
	\hyperpage{21}
	\item ${\mathcal  S} _\lambda $, \hyperpage{15}
	\item ${\mathcal  V} (R) := VRV^T$, \hyperpage{12}
	\item ${\rm  vec}$, \hyperpage{8}
	\item \BAPp, best approximation problem, \hyperpage{3}
	\item \FRp, facial reduction, \hyperpage{7}
	\item \KKTp, Karush-Kuhn-Tucker, \hyperpage{13}
	\item \QAPp, quadratic assignment problem, \hyperpage{20}
	
	\indexspace
	
	\item adjoint, \hyperpage{7}
	\item auxiliary system, \hyperpage{7}, \hyperpage{9}
	
	\indexspace
	
	\item best approximation problem, \mbox  {\bf  BAP}, \hyperpage{3}
	
	\indexspace
	
	\item conjugate face of $K$, $K^{\Delta}$, \hyperpage{6}
	\item correlation matrix, \hyperpage{4}
	
	\indexspace
	
	\item data, $W\in \mathbb  {S}^n$, \hyperpage{3}
	\item degenerate point, \hyperpage{8}
	\item dual functional, $\phi(y,Z)$, \hyperpage{10}
	
	\indexspace
	
	\item elliptope, \hyperpage{4}, \hyperpage{25}
	\item exposing vector, \hyperpage{7}
	
	\indexspace
	
	\item face, $K\unlhd C$, \hyperpage{6}
	\item facial range vector, \hyperpage{6}
	\item facial reduction, \FRp, \hyperpage{7}
	\item feasible set, $\cF = \cL\cap K$, \hyperpage{4}
	
	\indexspace
	
	\item Gauss--Newton, \hyperpage{27}
	
	\indexspace
	
	\item implicit problem singularity, $\iips$, \hyperpage{7}, 
	\hyperpage{27}
	\item indicator function, $\iota _S$, \hyperpage{5}
	
	\indexspace
	
	\item Karush-Kuhn-Tucker, \textbf  {KKT}, \hyperpage{11}
	
	\indexspace
	
	\item max-singularity degree of $\cF$, $\maxsd(\cF)$, \hyperpage{7}
	\item method of alternating projections, \hyperpage{4}
	\item minimal face of $\cF$, $f = \face(\cF,\Snp)$, \hyperpage{12}
	\item Moore-Penrose generalized inverse, $\cdot^\dagger$, 
	\hyperpage{12}
	\item Moreau regularization of $\iota_{\Snn}$, $\Delta(X)$, 
	\hyperpage{5}
	
	\indexspace
	
	\item nondegenerate, \hyperpage{8}
	
	\indexspace
	
	\item open interval, $(x,y)$, \hyperpage{20}
	\item optimal value, $p^*$, \hyperpage{3}
	\item optimum, $X^*$, \hyperpage{3}
	\item orthogonal matrices, $\cOn$, \hyperpage{5}
	
	\indexspace
	
	\item projection onto closed convex set $S$, $P_S$, \hyperpage{5}
	\item proper face, \hyperpage{6}
	
	\indexspace
	
	\item quadratic assignment problem, \QAPp, \hyperpage{20}
	
	\indexspace
	
	\item recession direction, \hyperpage{22}
	\item reduced auxiliary system, \hyperpage{27}
	\item relative interior, $\relint$, \hyperpage{6}
	\item relative residual vector, $\beps^k$, \hyperpage{27}
	
	\indexspace
	
	\item singularity degree of $\cF$, $\sd(\cF)$, \hyperpage{3}, 
	\hyperpage{7}
	\item Slater constraint qualification, \hyperpage{8}
	\item spectrahedron, \hyperpage{3, 4}
	\item spectral function, \hyperpage{5}
	
	\indexspace
	
	\item triangular number, $t(n) = n(n+1)/2$, \hyperpage{8}
	
	\indexspace
	
	\item vontope, \hyperpage{4}, \hyperpage{25}, \hyperpage{30}
	
	\indexspace
	
	\item zero duality gap, $p^* = d^*$, \hyperpage{20}
	
\end{theindex}

\def\udot#1{\ifmmode\oalign{$#1$\crcr\hidewidth.\hidewidth
	}\else\oalign{#1\crcr\hidewidth.\hidewidth}\fi} \def\cprime{$'$}
\def\cprime{$'$} \def\cprime{$'$}

\addcontentsline{toc}{section}{Bibliography}

\end{document}